\documentclass[11pt]{article}

\usepackage{amssymb, amsmath, amsfonts, amsthm, graphics, amscd}
\usepackage[dvipsnames]{xcolor}
\usepackage{authblk}
\usepackage{tikz}
\usepackage{pgfplots}
\pgfplotsset{compat=1.8}
\newcommand{\mathsym}[1]{{}}

\def\R{{\mathbf R}}
\def\Z{{\mathbf Z}}
\def\P{{\mathbf P}}
\def\E{{\mathbf E}}

\def\S{{\mathfrak S}}

\DeclareMathOperator{\st}{\mathsf{st}}
\newcommand{\Ex}[1]{\mathbb{E}\{#1\}}

\newtheorem{theorem}{Theorem}[section]
\newtheorem{definition}{Definition}[section]
\newtheorem{proposition}{Proposition}[section]

\newtheorem{lemma}[theorem]{Lemma}

\begin{document}

\title{ 
The Four Point Permutation Test for\\
Latent Block Structure in Incidence Matrices}

\author{R. W. R. Darling}
\author{Cheyne Homberger}
\affil{National Security Agency, Fort George G. Meade,
MD 20755-6844, USA}
\date{\today}
\maketitle

\begin{abstract}
Transactional data may be represented as a bipartite graph $G:=(L \cup R, E)$,
where $L$ denotes agents, $R$ denotes objects visible to many agents, and an
edge in $E$ denotes an interaction between an agent and an object.
Unsupervised learning seeks to detect block structures in the adjacency matrix
$Z$ between $L$ and $R$, thus grouping together sets of agents with similar
object interactions. New results on quasirandom permutations suggest a
non-parametric \textbf{four point test} to measure the amount of block
structure in $G$, with respect to vertex orderings on $L$ and $R$.  Take
disjoint 4-edge random samples, order these four edges by left endpoint, and
count the relative frequencies of the $4!$ possible orderings of the right
endpoint.  When these orderings are equiprobable, the edge set $E$ corresponds
to a quasirandom permutation $\pi$ of $|E|$ symbols.  Total variation distance
of the relative frequency vector away from the uniform distribution on 24
permutations measures the amount of block structure.  Such a test statistic,
based on $\lfloor |E|/4 \rfloor$ samples, is computable in $O(|E|/p)$ time on
$p$ processors. Possibly block structure may be enhanced by precomputing
\textbf{natural orders} on $L$ and $R$, related to the second eigenvector of
graph Laplacians.  In practice this takes $O(d |E|)$ time, where $d$ is the
graph diameter.  Five open problems are described.
\end{abstract}

{\small
\textbf{Keywords: }
random permutation, graphon, binary contingency table, quasirandom hypergraph,
rank test, clustering, association mining,  Fiedler vector, unsupervised
learning, biclustering, permutation patterns\\

\textbf{MSC class: }62H20
}

\section{Introduction}

\subsection{A typical use case - Amazon product reviews}
Amazon Reviews data sets are available for many product categories. 
Sizes of five of these data sets are shown in Table \ref{t:amazon}, located in Section \ref{s;scaling}.
Fix a category, say books,
and consider a bipartite graph $G:=(L \cup R, E)$, where $L$ denotes reviewers,
$R$ denotes books, and an edge corresponds to existence of a review of a specific book
by a specific reviewer. 
Block structure in such a graph corresponds to a clustering of some set of reviewers around some
(unstated) type of books. The four point test developed in this paper quantifies the amount
of block structure on a scale from 0 to 1. For example, in the original ordering,
books received a 0.3 score while digital music received a 0.6, implying that reviewers of
digital music are more bound to their music genres than are book reviewers to their type of book.

\subsection{Notation for incidence data} \label{s:notation}

Association mining treats an incidence matrix, represented as an
 undirected \textbf{bipartite graph} $G:=(L \cup R, E)$ with ordered
left vertices $L:=\{u_1, \ldots, u_n\}$, degrees $\mathbf{w}:=(w_1, \ldots, w_n)$; and
ordered right vertices $R:=\{v_1, \ldots, v_m\}$, degrees $\mathbf{d}:=(d_1, \ldots, d_m)$.
There are $|E|=N$ incidences of form $(u_i, v_j) \in L \times R$, also written $u_i \sim v_j$,
where
\begin{equation} \label{e:degreesums}
\sum_i w_i = N = \sum_j d_j.
\end{equation}
At a finer level of detail \cite{sta}, the \textbf{joint degree matrix} $(N_{w,d})$ of $G$
is the integer matrix whose $N_{w,d}$ entry counts the number of edges $e \in E$
whose left endpoint has degree $w$, and whose right endpoint has degree $d$:
\begin{equation} \label{e:jointdegmat}
N_{w,d}:=|\{(u_i,v_j) \in E: w_i = w, d_j = d\}|.
\end{equation}

Alternatively, we may view the data as:
\begin{enumerate}
\item A \textbf{binary contingency table}, i.e. a 0-1 matrix $Z:=(Z_{i,j})$
 with given row sums $\mathbf{w}:=(w_1, \ldots, w_n)$
 and column sums $\mathbf{d}:=(d_1, \ldots, d_m)$. Here
\begin{equation} \label{e:incidences}
Z_{i,j}:=1_{u_i \sim v_j}.
\end{equation}

\item A \textbf{hypergraph} $(R, \mathcal{E})$ with given vertex degrees and hyperedge weights.
Here $\mathcal{E} = \{e_1, e_2, \ldots, e_n\}$ is in bijection with the left nodes $L$, and
\[
e_i:=\{v_j \in R: u_i \sim v_j \}.
\]
\end{enumerate}

\subsection{What is meant by absence of block structure?} \label{s:blockfree}
Classical studies of association in small contingency tables, summarized in Agresti \cite{agr},
focus on tests for statistical independence of rows in a small, dense random matrix. 
For reasons discussed in Appendix \ref{s:lrs2models}, such notions are entirely unsuited to
the discovery of block structure in large, sparse binary contingency tables.
Instead we follow a non-parametric approach suggested by the quasirandomness literature 
\cite{kra, len, len2, sha}. 

Repeat the following experiment many times:
\textit{pick $s$ edges uniformly at random, sort according to left end point, and
test whether all orderings of right end points are equally frequent.}
This approach is too crude to detect statistical dependence between a specific pair of rows
in a large matrix, but is able to detect block structure, as we shall see in
Section \ref{s:2blocks}. Moreover we will not need to consider arbitrarily large $s$;
the choice $s=4$ suffices.

The first tool we shall introduce is a method of converting a sample of
$s$ edges into an element of
$\S_s$,  where $\S_s$ denotes
the set of all permutations of length $s$.
All the definitions of this section
extend to bipartite multigraphs, in which a sample of edges might include two edges
which have the same pair of endpoints.

\begin{definition} \label{d:stoperm}
Sample $s \geq 2$ distinct edges 
\[
(u_{i(1)}, v_{j(1)}), (u_{i(2)}, v_{j(2)}), \ldots, (u_{i(s)}, v_{j(s)})
\]
in a bipartite (multi)graph $G:=(L \cup R, E)$ where $L$ and $R$ are ordered,
and sort them by left end point, so $i(1) \leq i(2) \leq \cdots \leq i(s)$.
A permutation $\sigma \in \S_s$ \textbf{induced by} the sample
means one that is selected uniformly at random from those with
the property\footnote{
If there are no ties in either sorted list, then there is
a unique $\sigma$ with this property.} :
\[
j(\sigma(1)) \leq j(\sigma(2)) \leq \cdots \leq j(\sigma(s)).
\]
\end{definition}

How could this allow us to test for absence of block structure?
Suppose $2 \leq s < N$ edges are picked uniformly at random, and sorted by left endpoint.
Repeat this many times.
If there is some $J \subset R$ such that right vertices $v_j \in J$ tend 
to appear earlier on such a list than those where $j \in R \setminus J$, then
 the $s!$ possible orderings of right endpoints in the sample are not 
equally likely. In this case lower numbered left vertices would tend to be associated with
right vertices $\{v_j \in J \}$. 
We shall study an explicit example in Section \ref{s:2blocks}.

\begin{definition} \label{d:abfos}
A sequence of random bipartite (multi)graphs
$G_k:=(L_k \cup R_k, E_k)$,
where $|L_k| \to \infty$, $|R_k| \to \infty$, $|E_k| \to \infty$
is called \textbf{asymptotically block-free of order $s$} if
the distribution of the permutation induced by
 a uniform random sample of $s$ distinct edges
converges to the uniform distribution on $S_s$, as $k \to \infty$.
If this condition holds for all $s \geq 2$, we call $(G_k)$
asymptotically block-free.
\end{definition}

\textbf{Remark: }
Lemma \ref{l:abffromperm} shows how such a sequence may be constructed.

In a practical situation, we typically have a single large graph, from which
we can draw many samples of size $s$. By partitioning the $N$ edges randomly into
sets of size $s$, we obtain $t:=\lfloor N/s \rfloor$ such samples, each of which
induces one of $s!$ permutations, as in Definition \ref{d:stoperm}. A
null hypothesis $H_0^s$ could now be phrased as: each of the $s!$ possible
outcomes in these $t$ independent multinomial trials has equal probability $1/s!$.
An alternative hypothesis $H_1^s$ is that these $s!$ possible
outcomes are not equally likely. The $\chi^2$ goodness of fit test to the
multinomial, with $s! -1$ degrees of freedom would be a natural choice to test $H_0^s$ versus $H_1^s$.

This procedure is still burdensome: it seems we must repeat for all $s \leq N/2$,
and then decide how to combine the results.
Fortunately it suffices to consider just the case $s = 4$. In other
words, the only hypothesis we need to test is $H_0^4$, which means that, for the 
given orderings of left and right vertices, the graph is block-free of order four.

\subsection{Permutations induced by samples of four edges suffice}

The main result of our paper is:
\begin{theorem}\label{t:abf4}
If a sequence of random bipartite (multi)graphs, as in Definition \ref{d:abfos}, is 
asymptotically block-free of order four, then it is asymptotically block-free of order
$s$ for all $s \geq 2$.
\end{theorem}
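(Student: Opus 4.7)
The plan is to reduce the theorem to the known characterization of quasirandom permutations by their 4-point densities (the theorem of Král and Pikhurko, to which \cite{kra} presumably refers). First I would set up a correspondence between each $G_k$ and a random permutation $\pi_k \in \S_{N_k}$, where $N_k := |E_k|$. List the edges in left-sorted order, breaking left-endpoint ties uniformly at random and independently doing the same for right-endpoint ties, so that $\pi_k(a)$ is the rank of the right endpoint of the $a$-th edge. A uniform sample of $s$ distinct edges is then in bijection with a uniform $s$-element subset $A\subseteq [N_k]$, and the permutation of $\S_s$ induced by the sample (Definition \ref{d:stoperm}) has exactly the distribution of the pattern of $\pi_k$ on $A$. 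Hence asymptotic block-freeness of order $s$ is equivalent to: for every $\sigma \in \S_s$, the expected density of $\sigma$ as a pattern in $\pi_k$ converges to $1/s!$.

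Next I would pass to the permuton framework. The sequence $(\pi_k)$, viewed as a sequence of (random) permutations of diverging length, is precompact in the pattern-density topology; every subsequential limit is a (random) permuton on $[0,1]^2$. The uniform permuton (Lebesgue measure) is characterized among all permutons by having every $s$-pattern density equal to $1/s!$ for every $s\geq 2$. The key input is the Král--Pikhurko theorem: this characterization already holds when we fix $s=4$, i.e., any permuton whose 24 four-point pattern densities are all $1/24$ must be the uniform permuton.

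Applying this to our setting, the hypothesis of the theorem says every subsequential limit of $\pi_k$ has all 4-point densities equal to $1/24$, so every such limit is the uniform permuton. Convergence of all subsequential limits to the same object gives convergence of the full sequence; by the correspondence above this yields asymptotic block-freeness of every order $s \geq 2$.

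The main obstacle is the bookkeeping around ties, since the Král--Pikhurko theorem is phrased for genuine permutations while a bipartite multigraph only produces a partial order on edges (left- or right-endpoint degrees can exceed one, and multi-edges may occur). I would verify that the uniform random tie-breaking in Definition \ref{d:stoperm} is consistent with randomly completing $\pi_k$ to a genuine permutation, so that the expected pattern density of $\sigma$ in $\pi_k$ equals the probability in Definition \ref{d:abfos} that a random $s$-edge sample induces $\sigma$. I would also check that the distinctness requirement in Definition \ref{d:stoperm} (sampling \emph{without} replacement from $E_k$) costs only an $O(s^2/N_k)=o(1)$ correction relative to sampling an $s$-subset of $[N_k]$, which is harmless since $N_k\to\infty$. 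Once this bookkeeping is done the 4-point characterization applies directly, and no further graph-theoretic work is needed.
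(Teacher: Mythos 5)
Your proposal follows essentially the same route as the paper: convert each graph into a (random) permutation of its $N_k$ edges by sorting edges by left and right endpoints with randomized tie-breaking (the paper's Definition \ref{d:graph2perm} together with the Inversion Lemma \ref{l:inversion}), and then invoke the Kr\'al$'$--Pikhurko four-point characterization of quasirandom permutations \cite{kra} to upgrade order-four block-freeness to every order $s$. The only difference is packaging --- you pass through permuton compactness and subsequential limits, while the paper applies the sequence form of the result (Property $P(4)$ implies $P(s)$) directly --- and both treatments gloss over the same fine point, namely that the hypothesis controls only the \emph{expected} pattern densities of the random permutations, so your proposal is correct at the same level of rigor as the paper's own proof.
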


The proof, which will be given later, comes from combining a combinatorics 
result of Kr{\'a}l$'$ \& Pikhurko \cite{kra}
concerning quasirandom permutations,
with a construction which maps a bipartite (multi)graph with $N$ edges to a permutation on
$N$ symbols.

\section{Patterns in Permutations}

We'll borrow some machinery from the study of permutation patterns. Use the \emph{one-line
notation} for permutations: identify $\pi \in \S_n$ with the sequence
$\pi(1) \pi(2) \ldots \pi(n)$. For example, 
\[ \S_3 = \{123, 132, 213, 231, 312, 321\}. \]

We start with two definitions:

\begin{definition}
  For any positive integer $n$ and any sequence of distinct real numbers $r = r_1,
  r_2, \ldots r_n$, we define the \emph{standardization} of $r$, denoted
  $\st(r)$, to be the unique permutation $\pi \in \S_n$ such that, for all $1
  \leq i,j \leq n$, 
  \[ r_i < r_j \quad \text{if and only if} \quad \pi(i) < \pi(j). \]
\end{definition}

\begin{definition}
  Let $\pi$ and $\sigma$ be two permutations. We say that \emph{$\sigma$ is
  contained as a pattern in $\pi$} if $\sigma$ is the standardization of some
  subsequence of $\pi$. We denote this by $\sigma \prec \pi$, and say that $\pi$ has a $\sigma$-pattern.
\end{definition}

For example, the permutation $213$ is contained as a pattern in the permutation
$531426$, since the standardization of the second, third, and final entry is
equal to $213$.  The set of all permutations equipped with this containment
order forms an infinite graded poset. The study of permutations patterns has a
rich history: for a survey of results in the area, see B\'ona~\cite{bonabook}. 

We'll apply some recent results concerning pattern \emph{counts} in random
permutations. For any permutations $\pi$ and $\sigma$, define $\nu_\sigma(\pi)$
to be the number of times $\sigma$ appears as a pattern within $\pi$. Note that
$\nu_\sigma$ is a function from the set of all permutations to the non-negative
integers. A result of Br\"and\'en and Claesson~\cite{branden} shows that every
permutation statistic can be expressed as a linear combination of pattern-counting functions. 

We now define a class of random variables based on pattern counts. For a
positive integer $n$ and a permutation $\sigma$, let $X_{n, \sigma}$ be 
the probability that in a random $n$-permutation, any fixed $k$-subset
of entries forms a $\sigma$-pattern.  It follows by linearity of expectation
that if $\sigma$ is any permutation of length $k$, for any fixed $k$-subset of entries we have 
\[ \Ex{X_{n, \sigma}} = \frac{1}{k!}. \]

B\'ona~\cite{bona2} showed
that, for any permutation $\sigma$, $X_{n, \sigma}$ is asymptotically normally
distributed as $n \rightarrow \infty$. Note, however, that while the mean of
this variable depends only on the length of $\sigma$, the variance depends on
the specific choice of pattern. Janson, Nakamura, and Zeilberger~\cite{jan3}
extended this result to show that for any two permutations $\sigma$ and $\tau$,
the random variables $X_{n, \sigma}$ and $X_{n, \tau}$ are jointly
asymptotically normally distributed as $n \rightarrow \infty$. 

Pattern occurrences are far from independent: for example, the number of $12$-patterns is clearly 
negatively correlated with the number of $21$ occurrences.
Figure~\ref{fig:pattern_correlations} shows the correlation of all length-4
patterns across the set of all permutations of length 8. 

\begin{figure}[ht]
  \centering
  \includegraphics[width=4in]{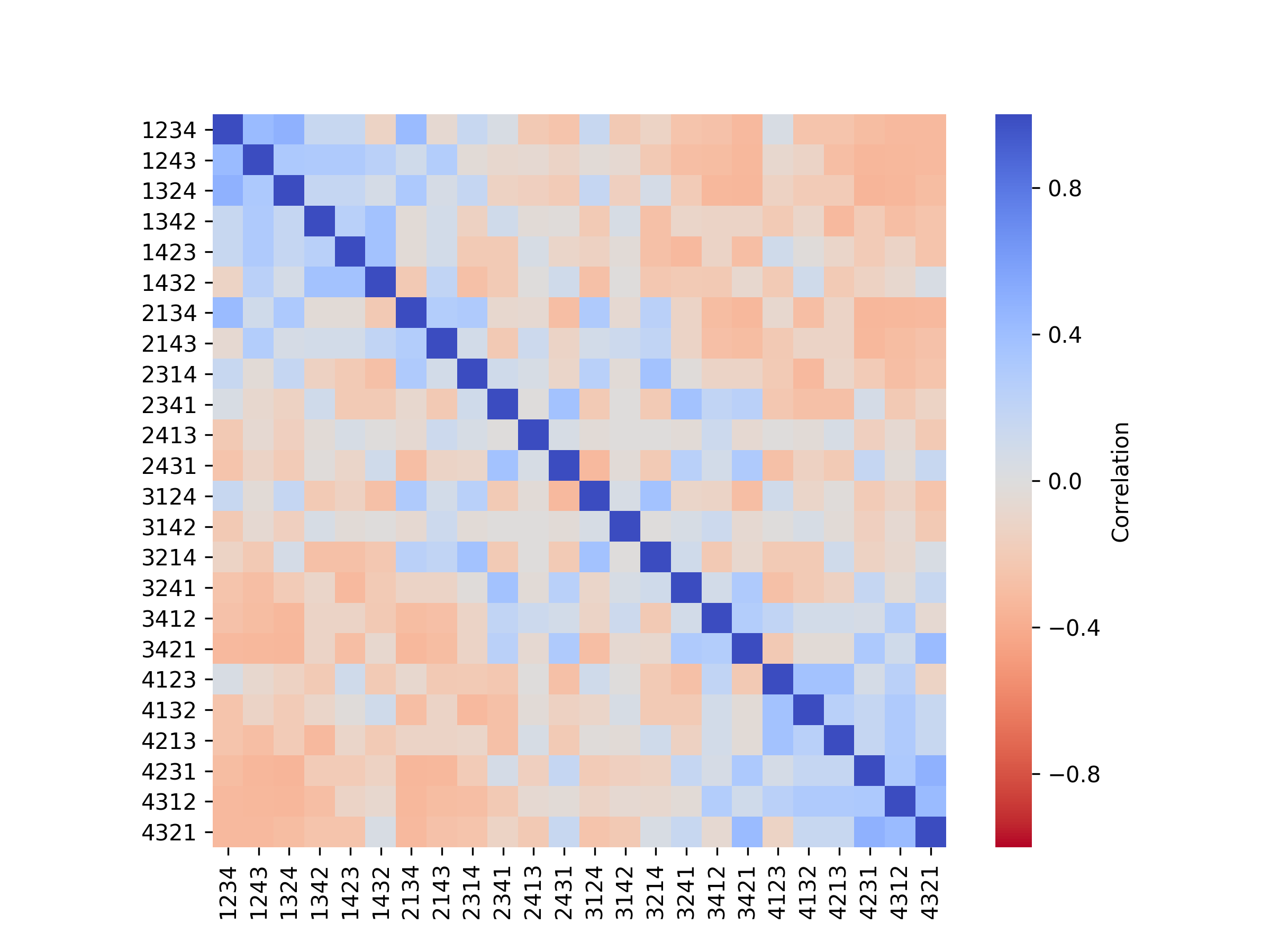}
  \caption{Correlation between length-4 pattern counts within all
  8-permutations.}
  \label{fig:pattern_correlations}
\end{figure}

\subsection{Permutation Classes and Structure}

Permutation patterns provide a framework for analyzing the structure of
permutations. The connection between patterns and block structure is most
clearly seen in the case of the \emph{separable} permutations, which we define
here after a few preliminary definitions.

For permutations $\sigma$ and $\tau$ of length $n$ and $m$, define
their \emph{direct}- and \emph{skew}-sum, denoted $\sigma \oplus \tau$ and
$\sigma \ominus \tau$ to be $(n+m)$-permutations as follows:
\begin{align*}
  (\sigma \oplus \tau)(i) &= 
      \begin{cases}
      \sigma(i) & 1 \leq i \leq n  \\
      \tau(i-n) + n &  n < i \leq n+m 
      \end{cases} \\
  (\sigma \ominus \tau)(i) &= 
    \begin{cases}
      \sigma(i) + m & 1 \leq i \leq n \\
      \tau(i-n)  & n < i \leq n+m
      \end{cases} 
    .
\end{align*}

These operations are more intuitively understood graphically in terms of the
plots of $\sigma$ and $\tau$. The direct sum places the plot of $\sigma$ below
and to the left of that of $\tau$, while the skew sum places it above and to the
left. See Figure~\ref{fig:direct_skew_sum}. 

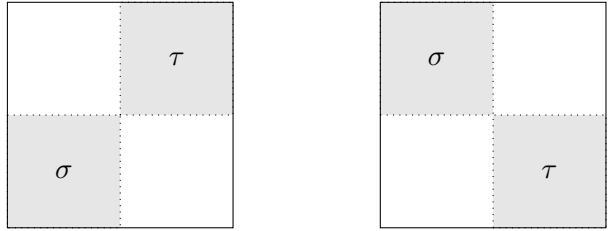
\begin{figure}[ht]
  \centering
      \begin{tikzpicture}[scale=.25]
        \draw[dotted, fill = black!10] 
              (0,6) -- (6,6) -- (6,0) -- (0,0) -- cycle;
        \draw[dotted, fill = black!10] 
              (6,6) -- (6,12) -- (12,12) -- (12,6) -- cycle;
        \draw (0,0) -- (12,0) -- (12,12) -- (0,12) -- cycle;
        \node at (3,3) {$\sigma$};
        \node at (9,9) {$\tau$};
      \end{tikzpicture}
      \hspace{4pc}
      \begin{tikzpicture}[scale=.25]
        \draw[dotted, fill = black!10] 
              (0,6) -- (6,6) -- (6,12) -- (0,12) -- cycle;
        \draw[dotted, fill = black!10] 
              (6,0) -- (6,6) -- (12,6) -- (12,0) -- cycle;
        \draw (0,0) -- (12,0) -- (12,12) -- (0,12) -- cycle;
        \node at (3,9) {$\sigma$};
        \node at (9,3) {$\tau$};
      \end{tikzpicture}

  \caption{The plots of the permutations $\sigma \oplus \tau$ and $\sigma
  \ominus \tau$. } \label{fig:direct_skew_sum}
\end{figure}

A permutation is said to be \emph{sum- (resp., skew) indecomposable} if it
cannot be written as the direct (resp., skew) sum of two permutations. A
\emph{decomposable} permutation is one which can be written as either a direct
or a skew sum in some way.

A \emph{separable} permutation is one which can be decomposed as sums of the
trivial permutation of length 1. For example, the permutation $\pi = 215643798$ is
separable, since
\[ \pi = \Big(1 \ominus 1\Big) \oplus \Big( (1 \oplus 1) \ominus 
       1 \ominus 1 \Big) \oplus 1 \oplus \Big(1 \ominus 1\Big).\]
Equivalently, a separable permutation is one which is \emph{recursively
decomposable}: it can be decomposed into blocks which themselves can be
decomposed into blocks, which themselves can be decomposed into blocks, etc. See
Figure~\ref{fig:separable} for the decomposition of the plot of $\pi$. 

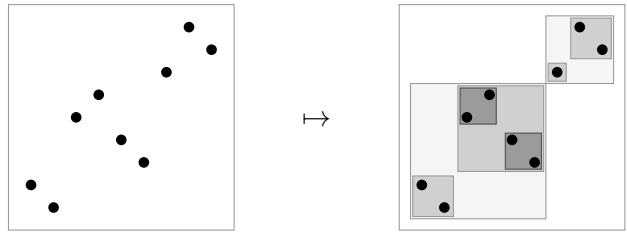
\begin{figure}[ht]
  \centering
  \begin{tikzpicture}[scale=.3]
      \draw[draw=black!40] (0,0) rectangle (10,10);
      \foreach [count=\x] \y in {2,1,5,6,4,3,7,9,8}{
      \node[circle,inner sep=.5mm, fill=black] at (\x,\y) {};
      }
  \end{tikzpicture}
  \qquad
  \raisebox{3.3pc}{$\mapsto$}
  \qquad
  \begin{tikzpicture}[scale=.3]
      \draw[draw=black!40] (0,0) rectangle (10,10);

      \draw[draw=black!40, fill opacity=.4, fill=black!10] 
        (.5,.5) rectangle (6.5,6.5);
      \draw[draw=black!40, fill opacity=.4, fill=black!10] 
        (6.5,6.5) rectangle (9.5,9.5);

      \draw[draw=black!40, fill opacity=.4, fill=black!40] 
        (.6,.6) rectangle (2.4,2.4);
      \draw[draw=black!40, fill opacity=.4, fill=black!40] 
        (2.6,2.6) rectangle (6.4,6.4);
      \draw[draw=black!40, fill opacity=.4, fill=black!40] 
        (6.6,6.6) rectangle (7.4,7.4);
      \draw[draw=black!40, fill opacity=.4, fill=black!40] 
        (7.6,7.6) rectangle (9.4,9.4);

      \draw[draw=black!70, fill opacity=.4, fill=black!70] 
        (2.7, 4.7) rectangle (4.3, 6.3);
      \draw[draw=black!70, fill opacity=.4, fill=black!70] 
        (4.7, 4.3) rectangle (6.3, 2.7);

      \foreach [count=\x] \y in {2,1,5,6,4,3,7,9,8}{
      \node[circle,inner sep=.5mm, fill=black] at (\x,\y) {};
      }
  \end{tikzpicture}

  \caption{The recursive block-decomposition of the plot of the permutation
  $\pi=215643798$.}
  \label{fig:separable}
\end{figure}

The set of separable permutations also has another important characterization: a
permutation is separable if and only if it does not contain either of the
patterns 2413 or 3142. Thus, a randomly chosen permutation which has this
recursive block structure must have no occurrences of these two patterns. In
fact, the block structure has an impact on other patterns as well. Using results
from Albert, Homberger, and Pantone~\cite{albert} or from Bassino,
et.al.~\cite{bouvel}, we can calculate the expected number of occurrences in a
random separable permutation of length $n$. 
Let $q'_{n, \sigma}$ denote the probability that a randomly chosen 4-subset of a
randomly chosen separable permutation of length $n$ forms a $\sigma$-pattern. 
Consider a random separable permutation of length $n$, and let $q'_\sigma = \lim_{n \rightarrow \infty} q'_{n, \sigma}$ be the
asymptotic probability that a randomly chosen $4$-subset forms a
$\sigma$-pattern.
We have:
\begin{equation} \label{eqn:separable_expectations}
    q'_{\sigma_1} = 1/8, \qquad q'_{\sigma_2} = 1/20, 
\qquad q'_{\sigma_3} = 1/40, \qquad q'_{\sigma_4} = 0, 
\end{equation} 
where
{ \small \begin{align*}
  \sigma_1 & \in \{1234, 4321\} \\
  \sigma_2 & \in \{1243, 1324, 1432, 2134, 2341, 3214, 3421, 4123, 4231, 4312 \}\\
  \sigma_3 & \in \{1342, 1423, 2143, 2314, 2431, 3124, 3241, 3412, 4132, 4213 \}\\
  \sigma_4 & \in \{2413, 3142 \}.
\end{align*}}

Recall that in the set of all permutations, all patterns are equally likely.
Intuitively, this shows that a block structure within a permutation affects the
number of occurrences of patterns. 

\subsection{Non-Overlapping Patterns}

We consider a related problem: counting pattern occurrences within a single, large permutation. If we were to count all patterns, we would be counting individual entries many times: a single occurrence of the pattern 12345, for example, would lead to 5 separate occurrences of the pattern 1234. Instead, we'll count non-overlapping patterns. Let $\pi$ be a permutation of length $n$, let $k = \lfloor n/4 \rfloor$, and let $S:= \{S_i\}_{i=1}^k$ be a family of randomly chosen disjoint subsets of $[n]$, each of size 4. We consider the multiset of $k$ patterns of length 4 located at each of these sets of indices. 

It follows by linearity of expectation that if $\pi$ is chosen uniformly at random and $\sigma$ is any pattern of length 4, then the expected number of times that $\sigma$ appears across the index sets $S$ is equal to $1/24$. Lemma \ref{l:disjointperms} provides a tool
to simplify distribution of test statistics in later sections.

\begin{lemma} \label{l:disjointperms}
Suppose $\pi$ is a uniform random permutation of length $n$, and let $\{S_i\}_{i=1}^k$ be a family of randomly chosen disjoint subsets of $[n]$, each of size 4.
For any $i \neq j$, the patterns formed at indices $S_i, S_j$ are statistically independent.
\end{lemma}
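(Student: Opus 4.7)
The plan is to reduce to a direct counting argument on a uniform random permutation, exploiting the fact that the family $\{S_i\}$ is chosen independently of $\pi$. First I would condition on the random positions: since the sets $\{S_i\}$ are selected independently of $\pi$, it suffices to fix arbitrary disjoint 4-subsets $A, B \subset [n]$ and show that the patterns $\st(\pi|_A)$ and $\st(\pi|_B)$ are independent with respect to the uniformly random $\pi$. By the symmetry of the uniform measure on $\S_n$ under relabeling of position indices, we may further take $A=\{1,2,3,4\}$ and $B=\{5,6,7,8\}$ without loss of generality.

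Next I would compute the joint probability by explicit counting. Given any target patterns $\sigma, \tau \in \S_4$, I count permutations $\pi \in \S_n$ satisfying $\st(\pi(1),\ldots,\pi(4))=\sigma$ and $\st(\pi(5),\ldots,\pi(8))=\tau$ as follows: choose the 4-element value set $V_A \subset [n]$ assigned to $A$ in $\binom{n}{4}$ ways, then the 4-element value set $V_B \subset [n]\setminus V_A$ assigned to $B$ in $\binom{n-4}{4}$ ways. Once $V_A$ is fixed, there is exactly one arrangement of its elements on $A$ realizing the pattern $\sigma$, and likewise one arrangement of $V_B$ on $B$ realizing $\tau$. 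The remaining $n-8$ values may be placed on the remaining $n-8$ positions in $(n-8)!$ ways. Multiplying yields
\[
\binom{n}{4}\binom{n-4}{4}(n-8)! \;=\; \frac{n!}{(4!)^2},
\]
so the joint probability equals $1/(4!)^2 = 1/576$.

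Finally, since each marginal pattern is known to be uniform on $\S_4$ (the same count with only one pattern constraint gives $n!/4!$, hence probability $1/24$), the joint probability $1/576$ equals the product $(1/24)(1/24)$ of the marginals for every pair $(\sigma,\tau)$. This is exactly the definition of independence, and undoing the conditioning on $S_i,S_j$ (which entered only as a choice of $A,B$) completes the argument. There is no serious obstacle here; the only subtlety worth flagging in the write-up is the initial reduction that lets us replace random disjoint $S_i,S_j$ by arbitrary fixed disjoint $A,B$, which uses both the independence of the sampling from $\pi$ and the positional symmetry of the uniform distribution on $\S_n$.
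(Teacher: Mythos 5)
Your proposal is correct and follows essentially the same route as the paper: the paper fixes the entries outside $S_i \cup S_j$ and permutes those inside to show every pair of patterns is realized by equally many permutations, while you make the same equidistribution explicit by counting $n!/(4!)^2$ permutations per pattern pair and comparing with the uniform marginals $1/4!$. The only extra step on your side, the reduction from random to fixed disjoint index sets via independence from $\pi$ and positional symmetry, is a harmless (and worth stating) refinement of what the paper leaves implicit.
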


\begin{proof}
We can fix all of the indices outside of $S_i \cup S_j$ and permute the entries of these sets in any way. This shows that there are precisely the same number of permutations $\pi$ having any specified pair of patterns at $S_i, S_j$. 
\end{proof}
The same, however, is not true in the separable case, since permuting these entries may lead to a forbidden pattern.

For example, Figure~\ref{fig:rand_separable} shows a uniformly randomly chosen separable permutation of length $10,379$. Figure~\ref{fig:compare_sep_to_random} shows a comparison in the non-overlapping pattern counts between this permutation and those of a permutation chosen uniformly at random from the set of all permutations of the same length.

\begin{figure}[ht]
    \centering
    \includegraphics[width=3in]{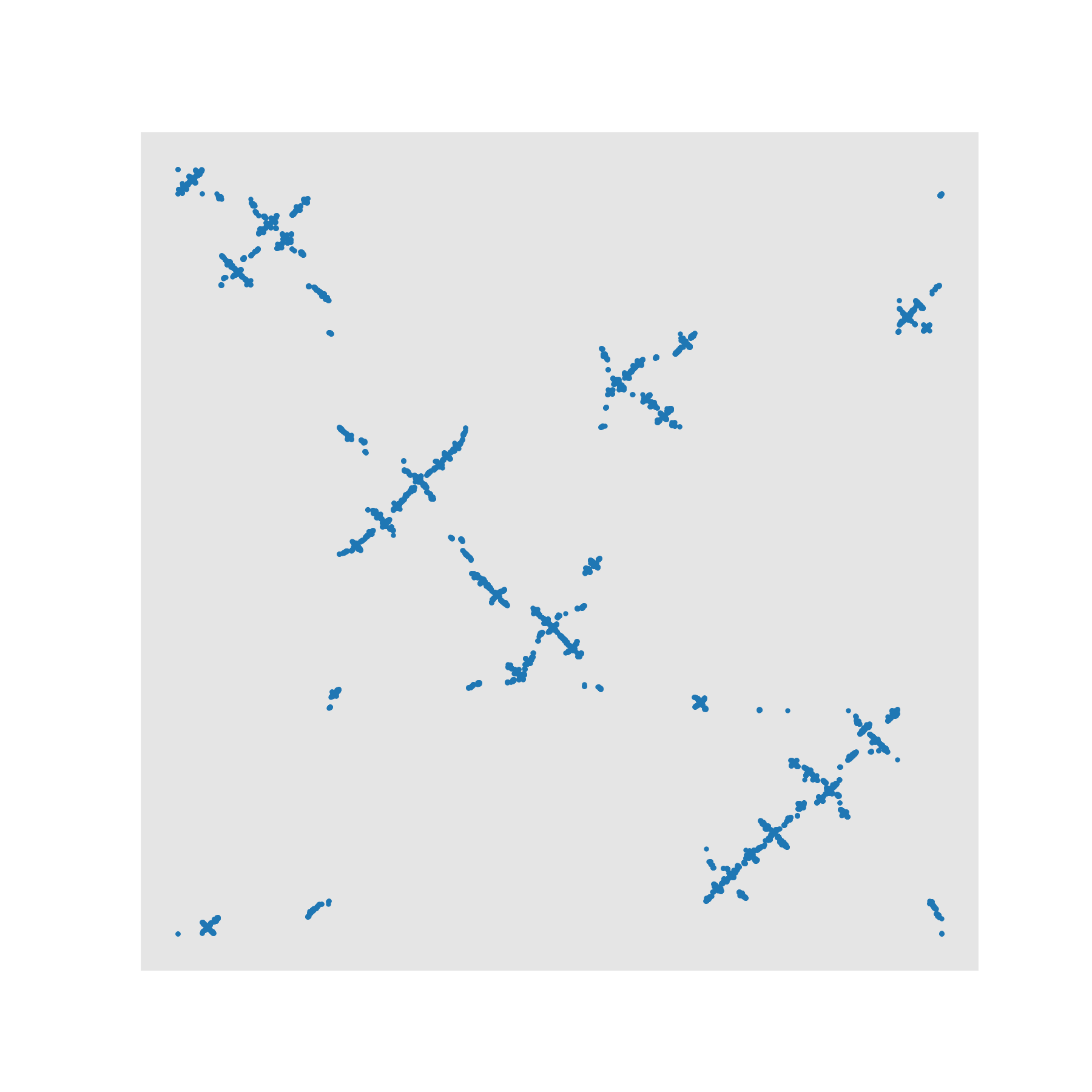}
    \caption{A random separable permutation of length $10,379$.}
    \label{fig:rand_separable}
\end{figure}

\begin{figure}[ht]
    \centering
    \includegraphics[width=4in]{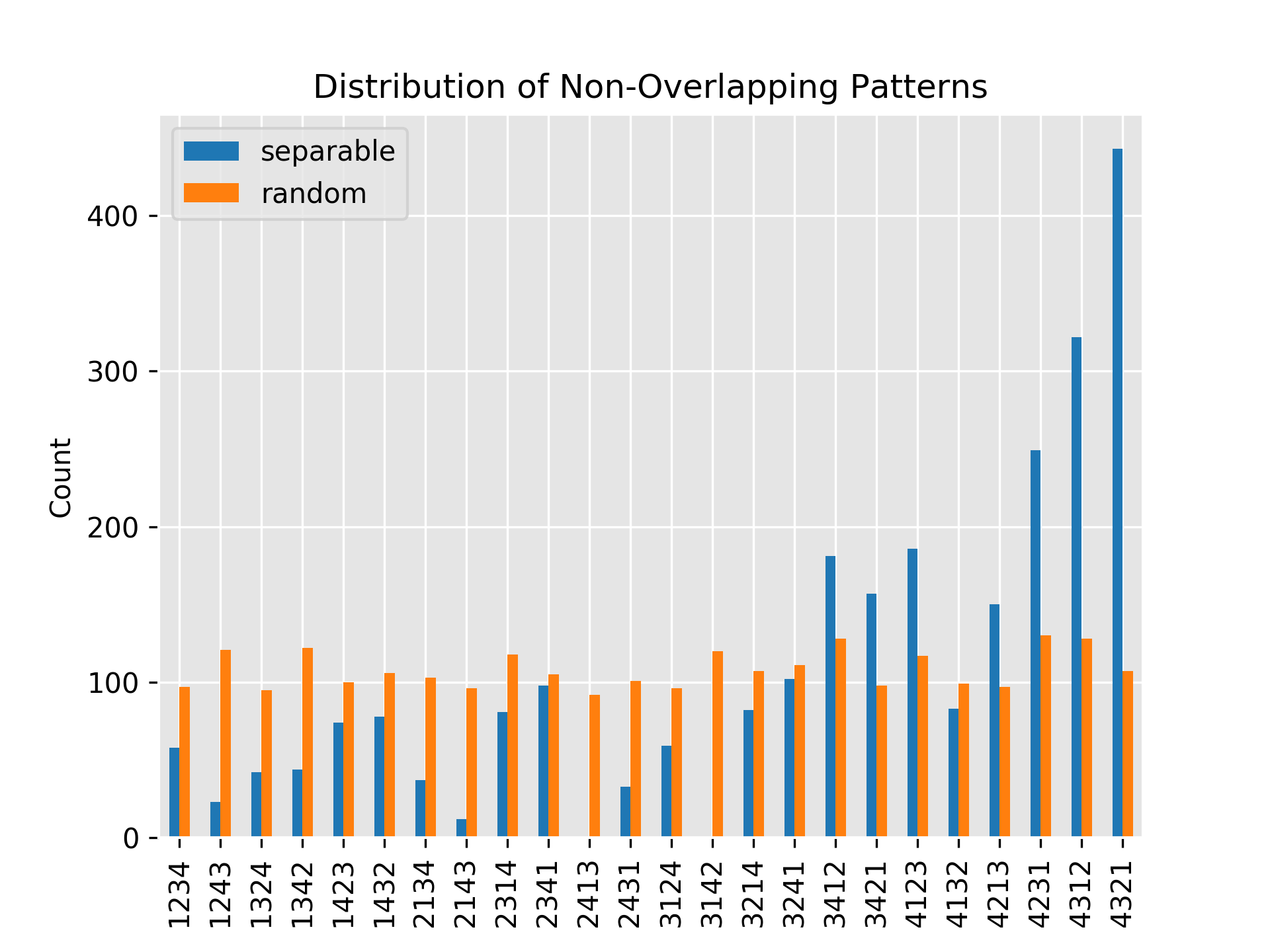}
    \caption{Comparison of non-overlapping pattern counts in a randomly chosen permutation and a randomly chosen separable permutation.}
    \label{fig:compare_sep_to_random}
\end{figure}

\section{Application: measuring block structure}
 
\subsection{4-permutations and Block Structure} 
We are given a sparse bipartite (multi)graph with $n$ edges, with total orders on the left vertices
and on the right vertices.

We define a permutation $\pi$ of length $n$ based on the graph as follows: sort
the sequence of edges according to their left endpoint, and let $a = a_1, a_2, \dots
a_n$ be the sequence of right endpoints. The values of this sequence are not
necessarily distinct, but we can create a distinct sequence $a'$ by introducing
some small random noise: let  $a'_i = a_i + X_i$, where
$\{X_i\}_{i=1}^n$ is an i.i.d. sequence distributed uniformly on $[0,1]$. 
Let $\pi$ be the standardization of $a'$. 

Now, take four elements from $\pi$ at random, and consider their
standardization.

The \textbf{null hypothesis} $H_0^4$ says: each of the $24$ possible
outcomes is equally likely.

The \textbf{alternative hypothesis} $H_1^4$ says: some patterns of length 4 are
more likely than others. 

Here is how we propose to perform the test of $H_0^4$ versus $H_1^4$ in $O(|E|)$
time, or indeed $O(|E|/p)$ time if Steps 2 and 3 of Section \ref{s:4pttest} are
distributed among $p$ processors.

\subsection{Lehmer codes: a convenient tool}
In practical computation, the ordering of right endpoints may be represented by 
the \textbf{Lehmer code}\footnote{
Suggested by Ryan Kaliszewski, personal communication}
which maps the sequence $(v_1, v_2, v_3, v_4)$ to $(L_1, L_2, L_3, L_4)$, where
\begin{equation} \label{e:Lehmer}
L_i = \# \{j > i: v_i > v_j\} \in \{0, 1, \ldots, 4-i\}.
\end{equation}
For example $(141, 817, 96, 108)$ has Lehmer code $(2, 2, 0, 0)$. Next the mapping
\begin{equation} \label{e:Lehmer2perm}
(L_1, L_2, L_3) \rightarrow 6 L_1 + 2 L_2 + L_3
\end{equation}
is a bijection from $\{0,1,2,3\} \times \{0,1,2\} \times \{0,1\}$ to $\{0,1,\ldots, 23\}$,
bearing in mind that $L_4 = 0$.

\subsection{Four point test: computational steps}\label{s:4pttest}
Recall that $L$ and $R$ are ordered sets of vertices, inducing two partial orders
on the set $E$ of $N$ edges, namely the partial order by left endpoint, and
the partial order by right endpoint, respectively.

\begin{enumerate}\setcounter{enumi}{-1}

\item
For tie-breaking purposes, select independently, and uniformly at random, 
total orders $\prec_L$ and $\prec_R$ on $E$ among the linear
extensions of the partial orders induced by those on $L$ and $R$, respectively.
For example, if $L$ and $R$ are sets of integers, this can be achieved
by jittering each $e:=(u_e,v_e) \in E \subset \Z^2$ to $(u_e + \eta_e,v_e + \eta'_e)$,
where $(\eta_e,\eta'_e)_{e \in E}$ are pairs of independent Uniform$(-b, b)$ random variables,
for $b < 1/2$.

\item
Draw $\lfloor N/4 \rfloor$ samples\footnote{
In practice, order $E$ randomly, then partition it into blocks of length four, discarding
any remainder.
} of size four from $E$, uniformly and without replacement.
Thus no edge is sampled more than once.

\item
Order each block of size four, say $e_i:=(u_i, v_i)$, $1 \leq i \leq 4$,
by $\prec_L$:
\begin{equation} \label{e:4-order}
e_1 \prec_L e_2 \prec_L e_3 \prec_L e_4,
\end{equation}
so $u_1 < u_2 < u_3 < u_4$ in $L$ if all these left vertices are distinct.
Compute the standardization associated with the 
ordering of $(e_i)_{1 \leq i \leq 4}$ under $\prec_R$, which coincides with the ordering
of $(v_1, v_2, v_3, v_4)$ in $R$, if all these right vertices are distinct.
For example if the block of four is 
\[
\{(-310, 96), (-477, 817), (-621, 141), (-65, 108)\},
\]
sorting by left vertex gives 
\[\{(-621, 141), (-477, 817), (-310, 96), (-65, 108)\},
\]
and the Lehmer code for $(141, 817, 96, 108)$ is $(2, 2, 0, 0)$.

\item
These $\lfloor N/4 \rfloor$ samples yield a vector $\mathbf{X}:=(X_0, X_1, \ldots, X_{23})$
counting the frequencies of each pattern.

Under the null hypothesis $H_0^4$, 
Lemma \ref{l:disjointperms} ensures that
$\mathbf{X}$
stores $\lfloor N/4 \rfloor$ independent 
multinomial$(\frac{1}{24}, \ldots, \frac{1}{24})$ trials.

\item
\begin{itemize}
\item[(a)]
Suppose we wish to test the null hypothesis $H_0^4$, i.e.
the graph is block-free of order four, for the 
given orderings of left and right vertices.
Perform a $\chi^2$ goodness of fit test of $\mathbf{X}$ with respect to the 
multinomial$(\lfloor N/4 \rfloor, \mathbf{p})$ distribution.

The expected value of each
$X_i$ under the null hypothesis $H_0^4$ is $\theta:= \lfloor N/4 \rfloor /24$. Compare the
\textbf{four point chi-squared statistic} (\textbf{4PT}-$\chi^2$ for short)
\begin{equation} \label{e:4ptteststat}
T_4:= \sum_{i=0}^{23} \frac{(X_i - \theta)^2}{\theta}
\end{equation}
to the upper tail of the $\chi^2(23)$ distribution.

\item[(b)]
Suppose $H_0^4$ has been rejected, and
we seek a scale-free measure of how much block structure the graph has, 
with respect to the given orderings of left and right vertices.
We propose to use the \textbf{total variation distance}, or \textbf{4PT-TV},  between the empirical probability
measure which assigns mass $X_i / \lfloor N/4 \rfloor$ to Lehmer code $i$,
and the uniform measure on the 24 Lehmer codes, namely

\begin{equation} \label{e:imbalance}
D_4:= \frac{1}{2 \lfloor N/4 \rfloor} \sum_{i=0}^{23} |X_i - \theta| \in [0,1].
\end{equation}

\end{itemize}

\end{enumerate}

\subsection{Basic example: bipartite graph with two blocks} \label{s:2blocks}

\begin{figure}
\caption{ \textit{
Bipartite graph whose incidence matrix decomposes into two blocks.}
} \label{f:2blocks}
\begin{center}
\scalebox{0.4}{\includegraphics{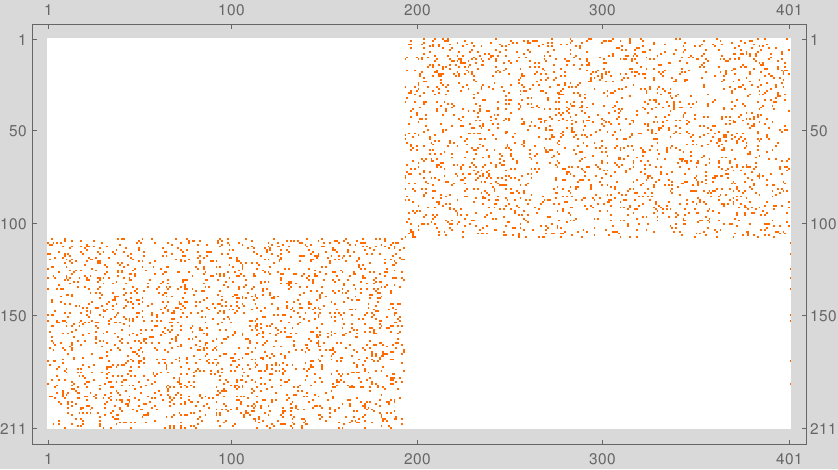}} 
\end{center}
\end{figure}

Figure \ref{f:2blocks} shows an example where all the incidences in the bipartite graph
fall either in $A \times B$ or in $A^c \times B^c$, for some $A \subset L$ and $B \subset R$.
Suppose proportion $\alpha$ of incidences fall into $A \times B$, and 
proportion $1-\alpha$ fall into $A^c \times B^c$.
As in Figure \ref{f:2blocks}, suppose vertices in $A$ are listed before those in $A^c$, and those in
$B$ are listed before those in $B^c$. Call this the
\textbf{ordered two block model}.

Suppose four incidences $(u_1, v_1)$, $(u_2, v_2)$, $(u_3, v_3)$, $(u_4, v_4)$
are selected uniformly at random, labelled so that
\[
(u_1, v_1) \prec_L (u_2, v_2) \prec_L (u_3, v_3) \prec_L (u_4, v_4)
\]
as in (\ref{e:4-order}).
Let $Y$ denote the number of these incidences which belong to the $A \times B$ block.
Then $Y\sim$ Binomial$(4, \alpha)$. For example, when $Y=2$, the ordering
(\ref{e:4-order}) implies that
\[
(u_1, v_1), (u_2, v_2) \in A \times B; \quad (u_3, v_3), (u_4, v_4) \in A^c \times B^c.
\]
Hence out of the 24 permutations, the only possible ones when $Y=2$ are those in the set
\[
\Pi_2:=\{1 2 3 4, 1 2 4 3, 2 1 3 4, 2 1 4 3\}.
\]
Likewise $\Pi_1$ consists of permutations where 1 is in the first place, 
$\Pi_3$ consists of permutations where 4 is in the last place, while $\Pi_0 = \S_4 = \Pi_4$.
From this reasoning, we obtain the simple lemma:

\begin{lemma}
In the ordered two block model, where a proportion $\alpha$ of incidences fall into the 
$A \times B$ block, and 
proportion $1-\alpha$ fall into the $A^c \times B^c$
block, the relative frequency of permutation $\pi \in \S_4$ is
\begin{equation} \label{e:permfreq}
h(\pi, \alpha):=\frac{f(\pi, \alpha)} {\sum_{\pi' \in \S_4} f(\pi', \alpha)};
\quad
f(\pi, \alpha): = \sum_{k=0}^4 \P[Y=k] 1_{ \{\pi \in \Pi_k  \} },
\end{equation}
where $Y\sim$ Binomial$(4, \alpha)$, and $\Pi_k$ is the set of permutations which are possible
under the constraint that the first $k$ of $u_1, u_2, u_3, u_4$ belong to $A$.
\end{lemma}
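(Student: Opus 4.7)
The plan is to condition on the random variable $Y$ counting how many of the four sampled edges lie in the $A \times B$ block, and then, for each value of $Y$, enumerate the permutations in $\S_4$ that are compatible with the induced prefix/suffix structure.

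First I would verify that $Y \sim \mathrm{Binomial}(4, \alpha)$. Since a proportion $\alpha$ of the $N$ incidences lie in $A \times B$ and we draw four edges uniformly from $E$, in the large-$N$ limit (or equivalently under sampling with replacement) each draw is an independent Bernoulli($\alpha$) indicator for membership in $A \times B$, so $Y$ is the binomial count of successes across four trials.

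Next I would identify $\Pi_k$ by a short case analysis in the spirit of the $Y=2$ calculation already worked out in the excerpt. Because $A$ precedes $A^c$ under $\prec_L$, after sorting the four sampled edges by left endpoint the first $k$ positions are occupied by the $A \times B$ edges and the last $4-k$ by the $A^c \times B^c$ edges. Because $B$ precedes $B^c$ under $\prec_R$, the right endpoints of the first $k$ edges all lie in $B$ and are therefore strictly smaller than the right endpoints of the last $4-k$, which lie in $B^c$. Consequently the standardization $\pi$ must map $\{1, \ldots, k\}$ bijectively to itself. Running this observation through $k = 0, 1, 2, 3, 4$ reproduces $\Pi_0 = \Pi_4 = \S_4$, $\Pi_1 = \{\pi : \pi(1) = 1\}$, $\Pi_3 = \{\pi : \pi(4) = 4\}$, and $\Pi_2 = \{1234, 1243, 2134, 2143\}$.

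Finally I would assemble the formula. Each permutation $\pi$ accumulates a weight $\P[Y=k]$ for every $k$ for which it is compatible with the block-membership constraint, and summing over $k$ gives $f(\pi, \alpha) = \sum_{k=0}^4 \P[Y=k]\,\mathbf{1}_{\pi \in \Pi_k}$. Normalizing so the 24 weights sum to $1$ produces $h(\pi, \alpha)$ as in (\ref{e:permfreq}).

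The most delicate point is the geometric observation in Step 2, namely that the ordering $A \prec_L A^c$ forces the block-membership pattern of the sorted sample to be a prefix/suffix split, and that $B \prec_R B^c$ then forces the right-endpoint standardization to preserve the initial segment $\{1, \ldots, k\}$. Once this is pinned down, Steps 1 and 3 are just the binomial law followed by normalization, and the case enumeration of $\Pi_k$ is mechanical.
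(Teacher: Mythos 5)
Your proposal is correct and follows essentially the same route as the paper's own derivation preceding the lemma: condition on $Y \sim \mathrm{Binomial}(4,\alpha)$, identify $\Pi_0=\Pi_4=\S_4$, $\Pi_1=\{\pi:\pi(1)=1\}$, $\Pi_3=\{\pi:\pi(4)=4\}$, and $\Pi_2=\{1234,1243,2134,2143\}$, then attach the weight $\P[Y=k]$ to each compatible pattern and normalize. Your explicit prefix/suffix justification of the $\Pi_k$ (using $A \prec_L A^c$ and $B \prec_R B^c$) and your remark that the binomial law is the large-$N$, with-replacement idealization of the uniform edge sampling are slightly more detailed than the paper's text, but the substance is identical.
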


These relative frequencies are displayed in Figure \ref{f:relfreq}
as a function of $\alpha$. This number
of curves is less than 24 because there exist different choices of $\pi \in\S_4$ for which the
functions $\alpha \to f(\pi, \alpha)$ coincide.

\begin{figure}
\caption{\textit{
For $\alpha$ on the horizontal axis, the 
vertical axis shows the
relative frequency $h(\pi, \alpha)$
 of permutation $\pi$ of right vertex orderings, when incidences fall into one of two blocks,
at rates $\alpha$ and $1-\alpha$, respectively. 
The 24 choices of $\pi$ yield only 8 distinct curves.}
} \label{f:relfreq}
\begin{center}
\scalebox{0.4}{\includegraphics{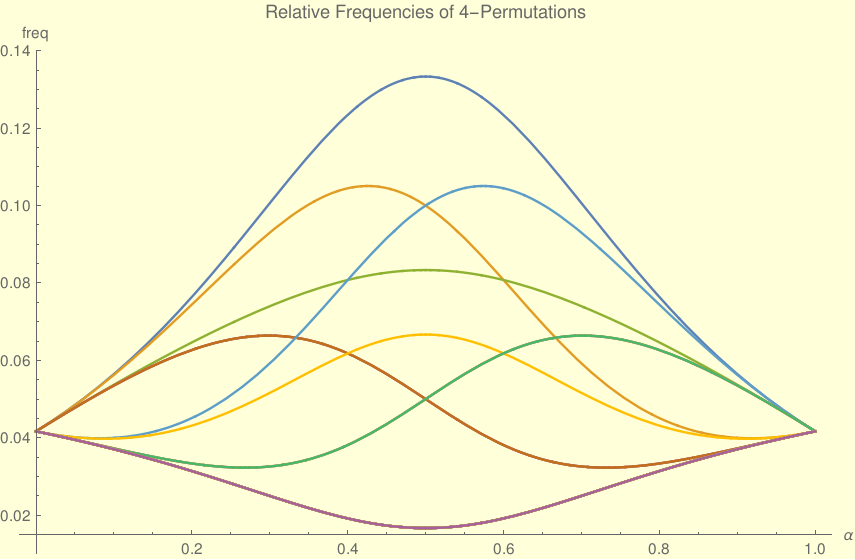}} 
\end{center}

\end{figure}

The permutations $\pi \in \S_4$ for which $f(\pi, 0.5) = 1/8$ (the lowest value) are
those in $\S_4 \setminus \{\Pi_1 \cup \Pi_2 \cup \Pi_3\}$.

Figure \ref{f:relfreq} demonstrates that
when vertex ordering reveals 
block structure in the 
incidence matrix, the relative frequencies of different permutations in $\S_4$ are tilted, just as they for
separable permutations (Figure \ref{fig:compare_sep_to_random}).

\section{How vertex ordering affects the four point test} \label{s:orderimportance}

\subsection{A pair of superficially similar but structurally different matrices}
We shall set up a pair of Bernoulli matrix models, $(Z_{i,j})$ and
$(Z'_{i,j})$, each with $n$ rows and $m$ columns,
whose marginal statistics and likelihood ratio statistics (see Section \ref{s:lrs2models})
are almost indistinguishable, 
but whose structure is entirely different, and then apply the four point test to each.
We will also show how changing the vertex ordering of one of them dramatically changes the
results of the four point test.

\begin{figure}
\caption{\textit{
The left glob represents Mathematica's \cite{wol} attempt at vertex partitioning
 for the bipartite graph
corresponding to the left incidence matrix in Figure \ref{f:2incmats}, an instance of
quasirandom structure as in Section \ref{s:bmm-noblocks}. 
The right pair of globs shows how the same algorithm partitions the bipartite graph
corresponding to the right incidence matrix in Figure \ref{f:2incmats}, an instance of
the two-block model of Section \ref{s:bmm-blocks}. 
}
} \label{f:2graphpartitions}
\begin{center}
\scalebox{0.4}{\includegraphics{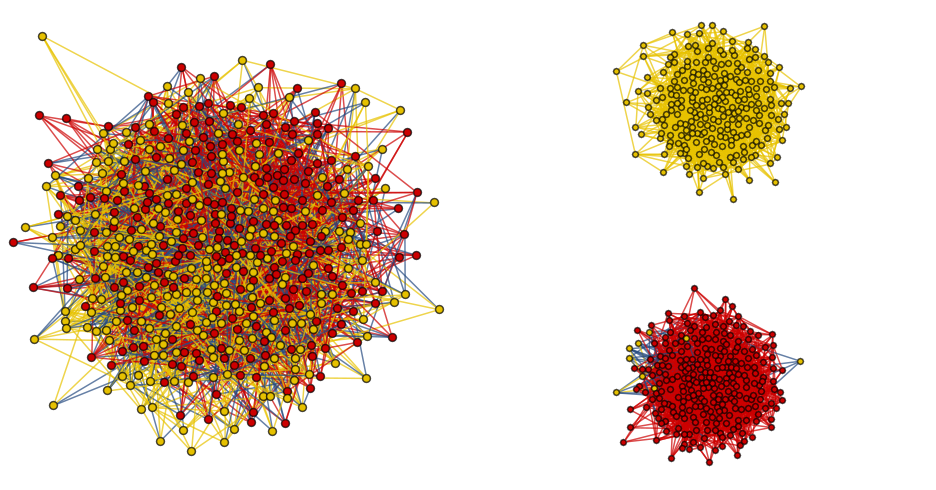} }
\end{center}
\end{figure}

Figures \ref{f:2graphpartitions} and \ref{f:2incmats} 
illustrate, respectively, (1) the structural difference between the two associated
bipartite graphs, one of which decomposes completely into two components, like the
one shown in Figure \ref{f:2blocks}, and (2) the superficial similarity of their
incidence matrices, under suitably randomized vertex orderings.

\begin{figure}
\caption{\textit{
Two $307 \times 211$ binary matrices
are shown. The left ($3116$ entries) has
pseudo-random structure as in Section \ref{s:bmm-noblocks}, and the right 
($3065$ entries) is the two-block model
of Section \ref{s:bmm-blocks}, where blocks are assigned random indices.
The right picture would look like Figure \ref{f:2blocks} under different vertex ordering.
With $\alpha = 0.48$ and $\gamma = 0.048$,
the two cases are indistinguishable to the naked eye.
The structural differences are revealed in Figure \ref{f:2graphpartitions}.
}
} \label{f:2incmats}
\begin{center}
\scalebox{0.4}{\includegraphics{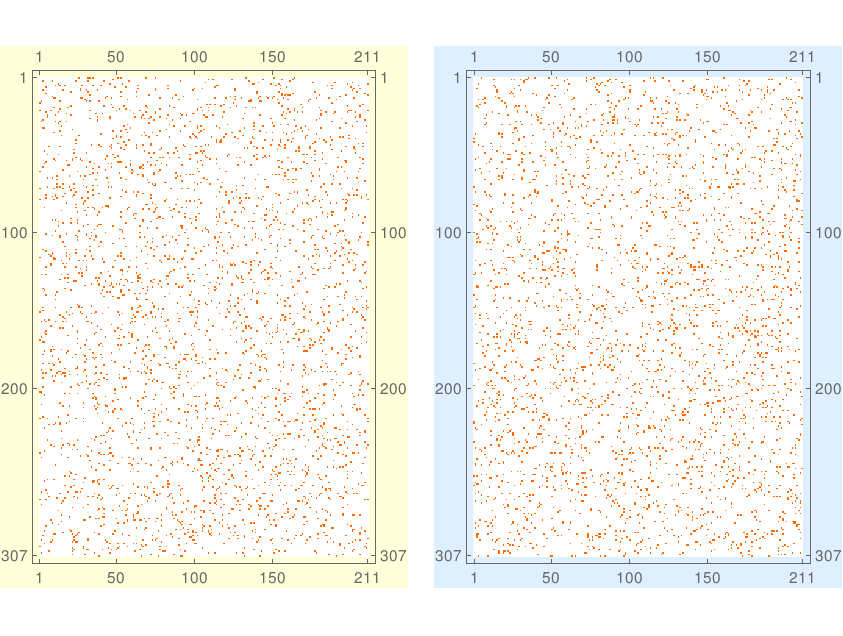}} 
\end{center}
\end{figure}

\subsection{Bernoulli matrix model lacking block structure}\label{s:bmm-noblocks}
Here is an elaborate pseudo-random construction based on modular arithmetic.
Select positive integers $a < b$ and $q:=(a + b)^2$, such that $q, n, m$ are coprime.
The real number
\(
\alpha:=a / \sqrt{q} < 1/2
\)
has the properties
\[
\alpha^2 = \frac{a^2}{q}; \quad (1 - \alpha)^2 = \frac{b^2}{q}.
\]
Partition the residues modulo $q$ into $R_0 \cup R_1 \cup R_2$ in any way so that
\[
|R_0| = q - a^2 - b^2; \quad |R_1| = a^2; \quad |R_2| = b^2.
\]
The Bernoulli parameters $(\gamma_{i,j})$ of $(Z_{i,j})$ are defined as follows.
Fix a reference constant $\gamma \in (0, \alpha)$. Let $f(i,j)$ be
the residue class  of $m (i-1) + n (j-1)$ modulo $q$.
Take
\begin{align*}
\gamma_{i,j}:= & 0, \quad f(i,j) \in R_0 \\
\gamma_{i,j}:= & \frac{\gamma}{\alpha}, \quad f(i,j) \in R_1 \\
\gamma_{i,j}:= & \frac{\gamma}{1-\alpha}, \quad f(i,j) \in R_2. 
\end{align*}
This deterministic scheme ensures that (to within a small discrepancy), 
\begin{enumerate}
\item
There is a pseudo-random set of
$\alpha^2 m n$ cells of the incidence table with parameter $\gamma/ \alpha$. 
\item
There is a disjoint pseudo-random set of
$(1 - \alpha)^2 m n$ cells of the incidence table with parameter $\gamma/ (1 - \alpha)$, 
\item
 The remaining $2 \alpha (1 - \alpha) m n$ of the cells of the incidence table have parameter zero.
\end{enumerate}
Furthermore the proportions of each of these three types of cell are almost the same in every
row and column, thanks to the use of residue classes of $m (i-1) + n (j-1)$ modulo $q$.
Indeed every column total has mean about $n \gamma$ and every row total has mean about $m \gamma$, 
since
 the weighted sum of the parameters in 1, 2, 3 is
\[
 \alpha^2 \cdot \frac{\gamma}{\alpha} + (1 - \alpha)^2 \cdot \frac{\gamma}{1-\alpha}
= \gamma.
\]
The main point is that no causally sparser or denser blocks of the incidence matrix will ever appear,
no matter how the rows and columns are ordered, because the placements of the zero parameters
are essentially different in every row and column. A realization appears on
the left in Figure \ref{f:2incmats}.

\subsection{Bernoulli matrix model with hidden block structure} \label{s:bmm-blocks}
We shall now modify the last example to produce a Bernoulli matrix model $(Z'_{i,j})$,
with block structure, whose Bernoulli parameters $(\gamma'_{i,j})$
are chosen so that the number of index pairs $(i,j)$
for which $\gamma'_{i,j} = \gamma/ \alpha$ is about $\alpha^2 m n$, the number for which
$\gamma'_{i,j} = \gamma/ (1 - \alpha)$ is about $(1 - \alpha)^2 m n$,  and the rest are zero, just as
for the previous case. Recall $\gamma < \alpha < 1/2$.

Let $A$ denote a random sample of $\lfloor \alpha n \rceil$ rows, and let
 $B$ denote a random sample of $\lfloor \alpha m \rceil$ columns.
The random choices of $A$ and $B$ effectively screen the block structure from
visual detection, when $\gamma$ is sufficiently small.
The Bernoulli parameters $(\gamma'_{i,j})$ of $(Z'_{i,j})$ are defined as follows.
\begin{align*}
\gamma'_{i,j}:= & 0, \quad (i,j) \in (A \times B^c) \cup (A^c \times B) \\
\gamma'_{i,j}:= & \frac{\gamma}{\alpha},  \quad (i,j) \in A \times B \\
\gamma'_{i,j}:= & \frac{\gamma}{1-\alpha}, \quad (i,j) \in A^c \times B^c.
\end{align*}

This resembles the example of Section \ref{s:2blocks}, in that a proportion $\alpha^2$
out of the expected total of $\gamma m n$ incidences appear in the $A \times B$ block, and
a proportion $(1 - \alpha)^2$ in the $A^c \times B^c$ block. See Figure \ref{f:2blocks}.
Here too every column total has mean $n \gamma$ and every row total has mean $m \gamma$, 
although the variances are slightly different to those of Section \ref{s:bmm-noblocks}.
In simulations of the models \ref{s:bmm-noblocks} and \ref{s:bmm-blocks},
the resulting incidence matrices are statistically indistinguishable to the 
naked eye for $\gamma / \alpha \leq 0.1$; see Figure \ref{f:2incmats}.

\begin{figure}
\caption{\textit{
The horizontal axis shows the 24 elements of the permutation group $\S_4$,
and the vertical axis shows the frequencies when $\lfloor 3065/4 \rfloor$
four-edge samples were drawn without replacement from the graph corresponding to
the right panel of Figure \ref{f:2incmats}, and converted using
Lehmer codes to elements of $\S_4$. Frequencies fit well to results of multinomial
trials with 24 equally likely outcomes, 
although hidden block structure is present, because the ordering of right and left
vertices conceals the structure of the model of Section \ref{s:bmm-blocks}.
The model of Section \ref{s:bmm-noblocks} gives a similarly uniform distribution.
}
} \label{f:permfrequconceal}
\begin{center}
\scalebox{0.4}{\includegraphics{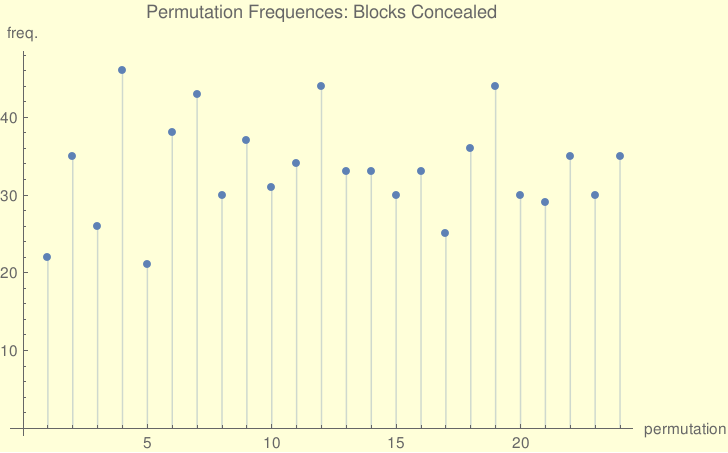} }
\end{center}
\end{figure}

\subsection{Four point test applied to concrete instances} \label{s:4ptstats}
The pseudo-random model of Section \ref{s:bmm-noblocks}, and the hidden block model
of Section \ref{s:bmm-blocks} were instantiated with $n = 307$, $m = 211$, 
$\alpha = 0.48$ and $\gamma = 0.048$. and presented in the left and right panels of
Figure \ref{f:2incmats}, with 3116 and 3065 incidences, respectively.
Block structure is imperceptible on the right
panel, because the index sets $A \subset L$ and $B \subset R$ were selected randomly.

The four point test was applied three times to each matrix. The pseudo-random matrix
scores\footnote{
Random sampling of 4-tuples of edges causes significant random variation
in scores.
}
$14.3, 24.2, 22.1$ were well within the $95$-th percentile $35.1$ of the $\chi^2(23)$
distribution.  Similar scores were observed for the model with
hidden block structure; the frequencies of different permutations are shown in Figure
\ref{f:permfrequconceal}.

Finally the vertex ordering was changed for the model with hidden block structure,
to make vertices in $A$ precede those in $A^c$, and vertices in $B$ precede those in $B^c$.
Such a re-ordering could be inferred from a graph partition algorithm, 
such as the one\footnote{
\texttt{FindGraphPartition} \cite{wol}} that
produced the right pair of globs in Figure \ref{f:2graphpartitions}. Afterwards
three applications of the four point test produced scores $1124, 1138, 1144$, far in the tail
of the $\chi^2(23)$ distribution, and Figure \ref{f:permfrequreveal} shows
the highly imbalanced permutation frequencies.

\subsection{Practical conclusions from the case study}
The case study emphasises that, when block structure is present, the four point test
will reveal it only when the vertices are ordered in a way to tilt the frequencies
of the permutations in $\S_4$. Consecutive applications of the test to the same matrix
will produce answers with a statistical variability which reflects the random
sampling of 4-subsets of the edges.

\begin{figure}
\caption{\textit{
The setting is the same as that of Figure \ref{f:permfrequconceal}, except that
vertex orderings were changed so
vertices in $A$ precede those in $A^c$, and vertices in $B$ precede those in $B^c$,
in the model of Section \ref{s:bmm-blocks}. 
The extreme non-uniformity of the frequencies of the
 24 elements of the permutation group $\S_4$ is apparent, as predicted
by Figure \ref{f:relfreq}.
}
} \label{f:permfrequreveal}
\begin{center}
\scalebox{0.4}{\includegraphics{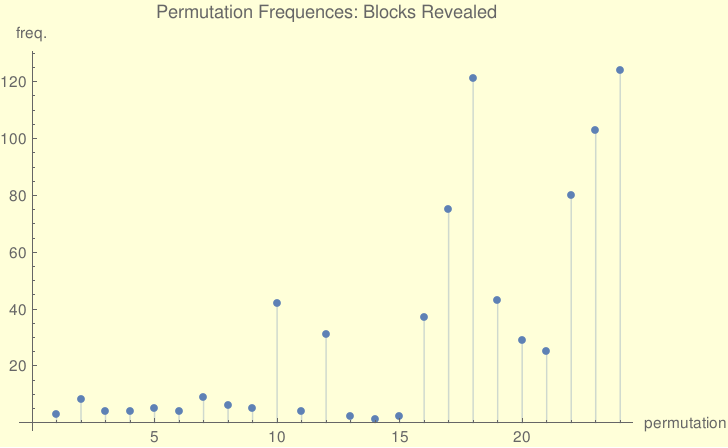} }
\end{center}
\end{figure}


\section{Natural vertex order computation in a bipartite graph} \label{s:naturalorder}

\subsection{Choosing right and left vertex orders}
We have seen in Section \ref{s:orderimportance} that the ordering of left and
right vertices strongly affects the output of the four point test. In this
section we describe one computationally efficient method to select a \textbf{natural order}
for the left vertices and for the right vertices, which tends to highlight
block structure and to boost the Four Point Statistic. See Figure \ref{f:naturalorder}
for a preview. This is not the only possible method: see Section \ref{s:mindeg}.

\subsection{Symmetric linear operators}
Extending the notation of Section \ref{s:notation}, introduce diagonal matrices
\[
\Omega:=\mbox{Diag}(w_1, \ldots, w_n); \quad \Theta:=\mbox{Diag}(d_1, \ldots, d_m).
\]
Rescale the incidence matrix $Z$ to give the $n \times m$ matrix:
\[
M:=\Omega^{-1/2} Z \Theta^{-1/2}.
\]
Introduce two rescaled symmetrized Laplacian operators:
\begin{equation} \label{e:laplacians}
\Delta_L:=I_n - M M^T; \quad \Delta_R:=I_m - M^T M,
\end{equation}
where $I_p$ denotes the $p \times p$ identity matrix. Define vectors
\[
\omega:= \Omega^{-1/2} \mathbf{w} \in \R^n; \quad \theta:= \Theta^{-1/2} \mathbf{d} \in \R^m.
\]
The following well known facts are easily checked by matrix multiplications.
\begin{lemma}
\begin{enumerate}
\item
Both $\omega$ and $\theta$ have norm $\sqrt{N}$ by (\ref{e:degreesums}), and are related by:
\begin{equation} \label{e:reciprocity}
\omega^T M = \theta^T; \quad M \theta = \omega.
\end{equation}
\item
$\omega$ is a positive unnormalized eigenvector of $\Delta_L$ with eigenvalue 0, i.e.
\(
\omega^T \Delta_L = 0;
\)
\item
$\theta$ is a positive unnormalized eigenvector of $\Delta_R$ with eigenvalue 0, i.e.
\(
\theta^T \Delta_R = 0;
\)
\end{enumerate}

\end{lemma}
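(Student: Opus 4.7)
The plan is to reduce each of the three claims to a direct matrix-algebra computation, using only the definitions of $\Omega$, $\Theta$, $M$, $\omega$, $\theta$ and the degree-sum identity (\ref{e:degreesums}). The only ``insight'' needed is to observe that $\omega$ and $\theta$ have particularly simple coordinate forms: since $\Omega^{-1/2}$ is the diagonal matrix with entries $w_i^{-1/2}$, we have $\omega_i = \sqrt{w_i}$, and likewise $\theta_j = \sqrt{d_j}$. This immediately gives $\|\omega\|^2 = \sum_i w_i = N = \sum_j d_j = \|\theta\|^2$, handling the norm statement in part (1).

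Next I would establish the reciprocity relations (\ref{e:reciprocity}). For $\omega^T M = \theta^T$, I would expand $\omega^T M = \mathbf{w}^T \Omega^{-1/2} \cdot \Omega^{-1/2} Z \Theta^{-1/2} = \mathbf{w}^T \Omega^{-1} Z \Theta^{-1/2}$, then note that $\mathbf{w}^T \Omega^{-1}$ is the all-ones row vector (since $\Omega^{-1}$ has entries $1/w_i$), so $\mathbf{w}^T \Omega^{-1} Z$ produces the column-sum row vector $\mathbf{d}^T$, and multiplying by $\Theta^{-1/2}$ gives $\theta^T$. The identity $M\theta = \omega$ is the transposed version of the same calculation, using row sums in place of column sums.

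Parts (2) and (3) then fall out as one-line consequences of (\ref{e:reciprocity}). For part (2),
\[
\omega^T \Delta_L = \omega^T - \omega^T M M^T = \omega^T - \theta^T M^T = \omega^T - (M\theta)^T = \omega^T - \omega^T = 0,
\]
and part (3) is symmetric:
\[
\theta^T \Delta_R = \theta^T - \theta^T M^T M = \theta^T - (M\theta)^T M = \theta^T - \omega^T M = \theta^T - \theta^T = 0.
\]
There is no real obstacle here; the only thing to watch is keeping careful track of which factor of $\Omega^{-1/2}$ or $\Theta^{-1/2}$ combines with $\mathbf{w}$ or $\mathbf{d}$ to produce an all-ones vector, since that is the step where the degree-sum identity implicitly enters through the equality $\mathbf{1}^T Z = \mathbf{d}^T$ and $Z \mathbf{1} = \mathbf{w}$.
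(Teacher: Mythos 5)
Your computation is correct and is exactly the routine matrix-multiplication verification the paper has in mind (it states these facts are "easily checked by matrix multiplications" and omits the details); the coordinate identification $\omega_i=\sqrt{w_i}$, $\theta_j=\sqrt{d_j}$, the reduction $\mathbf{w}^T\Omega^{-1}=\mathbf{1}^T$, $\mathbf{1}^TZ=\mathbf{d}^T$, $Z\mathbf{1}=\mathbf{w}$, and the one-line deductions of parts (2) and (3) from (\ref{e:reciprocity}) all check out. Only a trivial quibble: (\ref{e:degreesums}) is really used just to equate the two norms with $\sqrt{N}$, not in the reciprocity step itself.
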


Assume from now on that $Z$ describes, as in (\ref{e:incidences}), the edges
of a \textit{connected} bipartite graph $G$. It is well known \cite{chu} that connectedness implies that
$\omega$ and $\theta$ are the only eigenvectors with
eigenvalue 0 of $\Delta_L$ and $\Delta_R$, respectively,
and all other eigenvalues are strictly positive.

\subsection{Positive symmetric linear operators}
We shall now shift attention away from Laplacians, towards the positive symmetric operators
$M M^T$ and $M^T M$. We already know that $\omega / \sqrt{N}$ is the unique eigenvector
of eigenvalue 1 for $M M^T$, and likewise $\theta / \sqrt{N}$ for $M^T M$. 
Introduce a new symmetric linear operator on $\R^n$ which composes left multiplication
by $M M^T$ with projection orthogonal to $\omega$, namely
\[
\Gamma_L: \mathbf{y} \mapsto 
M M^T \mathbf{y} - \frac{ \omega^T M M^T \mathbf{y}} {\omega^T \omega}  \omega.
\]
Since $\omega^T \omega = N$ and $\omega^T M M^T = \omega^T$, we may write this operator as
a rank one perturbation of $M M^T$:
\[
\Gamma_L = M M^T - \frac{\omega \omega^T}{N}.
\]
The corresponding operator on $\R^m$ is
\[
\Gamma_R = M^T M - \frac{\theta \theta^T}{N}.
\]
Here are some facts about them, without proof.

\begin{lemma}\label{l:positiveoperators}
Let $r$ denote the rank of $Z$, i.e. of $M$, and suppose the associated bipartite graph $G$
is connected. Each of the operators $\Gamma_L$ and $\Gamma_R$ has the same set of
positive eigenvalues $\lambda_1 \geq \cdots \geq \lambda_{r-1}$, which belong to the set
$(0,1)$. All other eigenvalues of $M M^T$ and $M^T M$ are zero. 
Moreover if $\zeta \in \R^n$ denotes the eigenvector of $\Gamma_L$ associated with
$\lambda_1$, then $\xi:=M^T \zeta \in \R^m$ is an unnormalized eigenvector of $\Gamma_R$ 
associated with $\lambda_1$, and
\[
\omega^T \zeta = 0 = \theta^T \xi.
\]
\end{lemma}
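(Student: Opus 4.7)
The plan is to reduce the spectral analysis of $\Gamma_L$ and $\Gamma_R$ to that of $MM^T$ and $M^T M$, and then use the singular value decomposition of $M$ to match the two spectra. First, I would note that $MM^T$ is positive semidefinite with all eigenvalues in $[0,1]$: nonnegativity is automatic since $MM^T$ is a Gram matrix, and the upper bound $MM^T \preceq I_n$ follows from a Cauchy--Schwarz estimate applied to $v^T MM^T v = \sum_j (Z^T \Omega^{-1/2} v)_j^2 / d_j$ (this bound is implicit in the preceding discussion of $\Delta_L = I_n - MM^T$, whose nonzero eigenvalues were stated to be strictly positive). Combined with the fact from the preceding lemma that $\omega$ is a $1$-eigenvector of $MM^T$, and with connectedness of $G$ forcing this $1$-eigenvalue to be simple, the spectrum of $MM^T$ decomposes as: the simple eigenvalue $1$ with eigenvector $\omega/\sqrt{N}$; the eigenvalues of $MM^T$ restricted to $\omega^\perp$, which therefore lie in $[0,1)$; and any remaining zero eigenvalues. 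Since $\mathrm{rank}(MM^T) = r$, exactly $r-1$ of the restricted eigenvalues are strictly positive, and I would label them $\lambda_1 \geq \cdots \geq \lambda_{r-1} \in (0,1)$.

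Next, I would analyze $\Gamma_L = MM^T - \omega\omega^T/N$. Because $\omega^T\omega = N$, the subtracted term is exactly the orthogonal projector onto the line spanned by $\omega$, so $\Gamma_L \omega = 0$ while $\Gamma_L$ coincides with $MM^T$ on $\omega^\perp$. Hence $\Gamma_L$ inherits the spectrum of $MM^T$ with the $1$-eigenvalue swapped for $0$, and its positive eigenvalues are precisely $\lambda_1, \ldots, \lambda_{r-1}$. The identical argument with $\omega$ replaced by $\theta$ yields the same list of positive eigenvalues for $\Gamma_R$, using that $\theta$ is the $1$-eigenvector of $M^T M$ from the preceding lemma.

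The SVD of $M$ supplies the matching of eigenvectors: if $\mu \neq 0$ and $MM^T \zeta = \mu \zeta$, then $M^T \zeta$ is a nonzero $\mu$-eigenvector of $M^T M$ (indeed $\|M^T \zeta\|^2 = \mu \|\zeta\|^2 > 0$). Taking $\mu = \lambda_1$ and the $\zeta$ from the lemma, the previous paragraph forces $\zeta \in \omega^\perp$, so $MM^T\zeta = \Gamma_L\zeta = \lambda_1 \zeta$. Setting $\xi := M^T\zeta$, the reciprocity relation $M\theta = \omega$ from (\ref{e:reciprocity}) gives $\theta^T \xi = (M\theta)^T \zeta = \omega^T \zeta = 0$, which places $\xi \in \theta^\perp$ and therefore $\Gamma_R \xi = M^T M \xi = \lambda_1 \xi$, as claimed. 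Both orthogonality relations $\omega^T\zeta = 0$ and $\theta^T\xi = 0$ emerge from this same chain.

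I do not anticipate a serious obstacle. The only subtleties to keep track of are the replacement of the $1$-eigenvalue of $MM^T$ by a $0$-eigenvalue when passing to $\Gamma_L$ (which is the whole purpose of the rank-one correction), the strict upper bound $\lambda_1 < 1$, which requires simplicity of the $1$-eigenvalue and hence connectedness of $G$, and the strict lower bound $\lambda_1 > 0$, needed so that $M^T \zeta$ is nonzero when transferring the top eigenvector across the SVD.
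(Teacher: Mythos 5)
Your proof is correct, but note that the paper offers no argument to compare it with: Lemma \ref{l:positiveoperators} is introduced with ``Here are some facts about them, without proof,'' so your write-up fills a gap rather than paralleling an existing derivation. The route you take is the natural one and all the load-bearing steps are sound: the bound $MM^T \preceq I_n$ (your Cauchy--Schwarz computation works, and it also follows directly from the paper's prior statement that $\Delta_L$ has nonnegative spectrum); simplicity of the eigenvalue $1$ from connectedness, which is exactly what gives $\lambda_1 < 1$; the observation that $\omega\omega^T/N$ is the orthogonal projector onto $\mathrm{span}(\omega)$, so that $\Gamma_L$ kills $\omega$ and agrees with $MM^T$ on the invariant subspace $\omega^\perp$, with the rank count $r$ yielding exactly $r-1$ positive eigenvalues there; and the transfer $\xi := M^T\zeta$ with $\|M^T\zeta\|^2 = \lambda_1\|\zeta\|^2 > 0$, together with the reciprocity relation $M\theta = \omega$ of (\ref{e:reciprocity}) giving $\theta^T\xi = \omega^T\zeta = 0$ and hence $\Gamma_R\xi = M^TM\xi = \lambda_1\xi$. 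The one point of ordering to tighten: when you assert that ``the identical argument'' gives $\Gamma_R$ the \emph{same} list $\lambda_1 \geq \cdots \geq \lambda_{r-1}$, that argument by itself only identifies the positive spectrum of $\Gamma_R$ with the nonunit positive spectrum of $M^TM$; equality of the two lists (with multiplicity) rests on the standard fact that $MM^T$ and $M^TM$ share nonzero eigenvalues, which is precisely the two-way eigenvector transfer you state in the following paragraph, so it should be invoked there rather than after. This is a presentational matter, not a gap.
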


\textbf{Remark: } $\zeta$ and $\xi$ are known as \textbf{Fiedler Vectors} for
the induced graphs on the left and right vertex sets, respectively.

Without loss of generality, suppose $n \geq m$; otherwise work instead with $Z^T$.
Hence we put emphasis on $\Gamma_L$, and derive results for $\Gamma_R$ from 
Lemma \ref{l:positiveoperators}.

\subsection{Power method}

\begin{proposition}[POWER METHOD] \label{p:powermethod}
Take a random vector $\eta \in \R^n$ whose components are independent normal$(0,1)$
random variables. Project $\eta$ orthogonal to $\omega$,
and rescale to norm 1 to obtain $\mathbf{y}_{(0)}$.
Iterate for $t \geq 1$:
\begin{equation} \label{e:iterate}
\mathbf{z}_{(t)} = M (M^T \mathbf{y}_{(t-1)} ) - 
\frac{\omega \cdot \mathbf{y}_{(t-1)} } {N} \omega; \quad
\mathbf{y}_{(t)} = \frac{\mathbf{z}_{(t)} }{ \|\mathbf{z}_{(t)}\| }.
\end{equation}
Let $\phi_t \in [0, \pi/2]$ denote the angle such that $\cos{\phi_t} = |\zeta \cdot \mathbf{y}_{(t)}|$.
The event $\cos{\phi_0} \neq 0$ has probability 1, and in that case 
\begin{equation} \label{e:convrate}
|\sin{\phi_t}| \leq  (\lambda_2 / \lambda_1)^t \tan{\phi_0}.
\end{equation}
This implies that, with probability 1, 
\(
 \lim_{t \to \infty} \mathbf{y}_{(t)}
\)
exists and is equal to $\zeta$ or to $-\zeta$. Provided $\lambda_2 < \lambda_1$, the convergence
occurs at an exponential rate.
\end{proposition}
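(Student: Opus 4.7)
The plan is to recognize the iteration as a standard power iteration applied to the symmetric operator $\Gamma_L$, and then diagonalize. A direct computation shows that
\[
\mathbf{z}_{(t)} = M M^T \mathbf{y}_{(t-1)} - \frac{\omega^T \mathbf{y}_{(t-1)}}{N}\,\omega = \Gamma_L\,\mathbf{y}_{(t-1)},
\]
so the update is just $\mathbf{y}_{(t)} = \Gamma_L\mathbf{y}_{(t-1)}/\|\Gamma_L\mathbf{y}_{(t-1)}\|$. Since the initial $\mathbf{y}_{(0)}$ is obtained from the Gaussian $\eta$ by projecting orthogonal to $\omega$, we have $\omega^T\mathbf{y}_{(0)} = 0$, and because $\Gamma_L$ preserves $\omega^\perp$, every $\mathbf{y}_{(t)}$ stays in $\omega^\perp$.

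Next I would diagonalize $\Gamma_L$ on $\omega^\perp$. By Lemma \ref{l:positiveoperators} there is an orthonormal basis $\zeta_1,\zeta_2,\ldots,\zeta_{n-1}$ of $\omega^\perp$ of eigenvectors of $\Gamma_L$ with corresponding eigenvalues $\lambda_1 \geq \lambda_2 \geq \cdots \geq \lambda_{r-1} > 0 = \cdots = 0$, where $\zeta_1=\zeta$. Expand $\mathbf{y}_{(0)} = \sum_{i} c_i \zeta_i$ with $c_i = \zeta_i\cdot\mathbf{y}_{(0)}$ and $\sum_i c_i^2 = 1$. By definition $c_1 = \cos\phi_0$. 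Applying $\Gamma_L^{\,t}$ and normalizing gives
\[
\mathbf{y}_{(t)} \;=\; \frac{\sum_i c_i\,\lambda_i^{\,t}\,\zeta_i}{\bigl(\sum_i c_i^{\,2}\,\lambda_i^{\,2t}\bigr)^{1/2}},
\qquad
\cos^2\phi_t = (\zeta\cdot\mathbf{y}_{(t)})^2 = \frac{c_1^{\,2}\,\lambda_1^{\,2t}}{\sum_i c_i^{\,2}\,\lambda_i^{\,2t}}.
\]
Therefore
\[
\sin^2\phi_t \;=\; \frac{\sum_{i\geq 2} c_i^{\,2}\,\lambda_i^{\,2t}}{\sum_i c_i^{\,2}\,\lambda_i^{\,2t}}
\;\leq\; \frac{\lambda_2^{\,2t}\sum_{i\geq 2} c_i^{\,2}}{c_1^{\,2}\,\lambda_1^{\,2t}}
\;=\; \Bigl(\tfrac{\lambda_2}{\lambda_1}\Bigr)^{2t}\tan^2\phi_0,
\]
using $\sum_{i\geq 2} c_i^{\,2} = 1 - \cos^2\phi_0 = \sin^2\phi_0$ in the last step. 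This is exactly (\ref{e:convrate}).

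For the probability-one and convergence assertions, I would argue as follows. The coefficient $c_1 = \zeta\cdot\mathbf{y}_{(0)}$ is, up to normalization, the inner product of $\zeta$ with the projection of a non-degenerate Gaussian vector onto $\omega^\perp$; since $\zeta \in \omega^\perp$ is nonzero, $c_1$ has an absolutely continuous distribution on $\R$, so $\P[c_1 = 0] = 0$, giving $\cos\phi_0\neq 0$ almost surely. On that event, the bound above forces $\sin\phi_t \to 0$ at geometric rate $\lambda_2/\lambda_1 < 1$, so $\mathbf{y}_{(t)}$ converges to $\pm\zeta$ (the sign ambiguity reflecting that $\mathrm{sign}(\zeta\cdot\mathbf{y}_{(t)})$ is eventually constant equal to $\mathrm{sign}(c_1)$).

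I do not expect any real obstacle here; the only mildly delicate point is handling the zero-eigenvalue components of $\Gamma_L$ (those $\zeta_i$ with $i \geq r$), but since $\lambda_i^{\,2t} = 0$ for those indices once $t \geq 1$, they drop out of the numerator and denominator simultaneously and do not affect the ratio. The essential hypothesis driving the geometric rate is the spectral gap $\lambda_2 < \lambda_1$; without it one only gets convergence to the top eigenspace of $\Gamma_L$ rather than to a single Fiedler direction.
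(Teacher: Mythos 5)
Your argument is correct, but it takes a different route from the paper: the paper's proof is a one-line appeal to the power-method convergence theorem in Golub \& Van Loan (Theorem 8.2.1), observing that the iteration (\ref{e:iterate}) is exactly power iteration for the symmetric operator $\Gamma_L$, and letting the cited theorem supply the bound on $|\sin\phi_t|$. You instead reprove that theorem from scratch: you identify $\mathbf{z}_{(t)}=\Gamma_L\mathbf{y}_{(t-1)}$, note that $\Gamma_L$ preserves $\omega^\perp$ so the iterates stay there, expand $\mathbf{y}_{(0)}$ in an orthonormal eigenbasis of $\Gamma_L$ on $\omega^\perp$, and compute $\sin^2\phi_t$ directly, obtaining precisely (\ref{e:convrate}); your Gaussian argument for $\P[\cos\phi_0\neq 0]=1$ and your remark that the zero eigenvalues drop out for $t\geq 1$ are both sound, and your observation that the sign of the $\zeta$-coefficient is constant (because all eigenvalues are nonnegative) cleanly justifies convergence to $\zeta$ or $-\zeta$. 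What the paper's approach buys is brevity and delegation to a standard reference; what yours buys is a self-contained verification that the cited theorem really applies to this rank-one-perturbed operator restricted to $\omega^\perp$, plus an explicit identification of the role of the spectral gap $\lambda_2<\lambda_1$ (which the proposition's unconditional ``limit exists'' phrasing glosses over, and which you correctly flag). The only cosmetic slip is writing $c_1=\cos\phi_0$ rather than $|c_1|=\cos\phi_0$, which is harmless since only $c_1^2$ enters your estimates.
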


\begin{proof}
This iterative scheme is the power method decribed in
Golub \& Van Loan \cite[Theorem 8.2.1]{gol} for the computation of
the eigenvector $\zeta$ with top eigenvalue $\lambda_1$ of the symmetric
linear operator $\Gamma_L$. The cited theorem proves the bound on $|\sin{\phi_t}|$.
\end{proof}

\textbf{Implementation issues: } 
\begin{enumerate}
\item
Since an approximation suffices, we propose to fix some $\delta > 0$ and to
stop the iteration (\ref{e:iterate}) at the first $t$ for which
\[
\|\mathbf{y}_{(t)} - \mathbf{y}_{(t-1)} \| < \delta.
\]
For a given spectrum, (\ref{e:convrate}) implies that $O(\log{(1/\delta)})$
matrix multiplies will suffice, each of which is $O(N)$ work. We observe in practice that
if the local structure of $G$ remains statistically similar as $N$ increases, the
number of iterations before stopping does not vary as $N$ increases, implying that
total work is $O(N \log{(1/\delta)})$. A crude upper bound $\hat{d}$ for graph diameter can be
obtained by selecting a left vertex uniformly at random, and taking $\hat{d}$ to be twice
the number of steps of breadth first search needed to cover the graph entirely.
In the absence of an estimate for $\lambda_2 / \lambda_1$, we observed that in sparse graphs
$2 \hat{d}$ iterations were sufficient for convergence when $\delta \approx 0.05$.
For more on the relation between graph spectrum and graph diameter,
see Chung \cite[Ch. 3]{chu}. The heuristic claim is that $O(N \hat{d})$ work
suffices for computing an adequate natural order.

\item
Probabilistic arguments show that the random variable $\tan{\phi_0}$ in the upper bound 
(\ref{e:convrate}) is $O(\sqrt{n})$.

\item
In experiments, 
the ratio $(1/t)\log{|\sin{\phi_t}|}$ is typically less than $\lambda_2 / \lambda_1$,
making the convergence faster than that implied by (\ref{e:convrate}).

\item
We have phrased the iteration (\ref{e:iterate}) in terms of the symmetric operator
$\Gamma_L$ in order to appeal to the literature on the symmetric eigenvalue problem.
In computational implementation the matrix $Z$ is typically given by two jagged arrays,
one giving a look-up by row, and the other giving a look-up by column. 
The iteration  (\ref{e:iterate}) can be implemented under the rescaling $x_i:=y_i/\sqrt{w_i}$:
\[
\mathbf{x} \mapsto \Omega ^{-1} Z (\Theta^{-1} Z^T \mathbf{x})
- \mathbf{1} \frac{\mathbf{w \cdot x}}{N},
\]
where $\mathbf{1}$ is the all ones vector.
The normalization step need not be performed in the $\| \cdot\|_2$
norm. It can, for example, be performed in the $\| \cdot\|_1$ norm instead.

\end{enumerate}

\subsection{Definition of natural  order}
\begin{definition}\label{d:naturalorder}
The left vertex set $L$ is in \textbf{natural order} if vertices are 
in decreasing or increasing order of the corresponding components of the eigenvector
$\zeta$, described in Lemma \ref{l:positiveoperators}. Likewise components of $\xi$
supply a natural order for the right vertex set $R$.
\end{definition}

In this definition we do not insist that $\zeta$ or $\xi$ be computed precisely.
Indeed an approximation, constructed as in Proposition \ref{p:powermethod}, suffices.

See Figure \ref{f:naturalorder} for an illustration of an incidence matrix transformed into
natural order of left and right vertices.

\subsection{Scaling behavior in natural order and four point test computations} \label{s;scaling}
\begin{figure}
\caption{\textit{
$4000 \times 3199$ incidence matrix shown at left, constructed as in Section \ref{s;scaling},
 is presented in natural order on the right.
Typical four point total variation scores (\ref{e:imbalance})
are 0.26 on the left, and 0.43 on the right. When similar matrix generation schemes are
applied at different scales, keeping the matrix aspect ratio fixed,
these scores (\ref{e:imbalance}) seem to be scale-free.
}
} \label{f:naturalorder}
\begin{center}
\scalebox{0.5}{\includegraphics{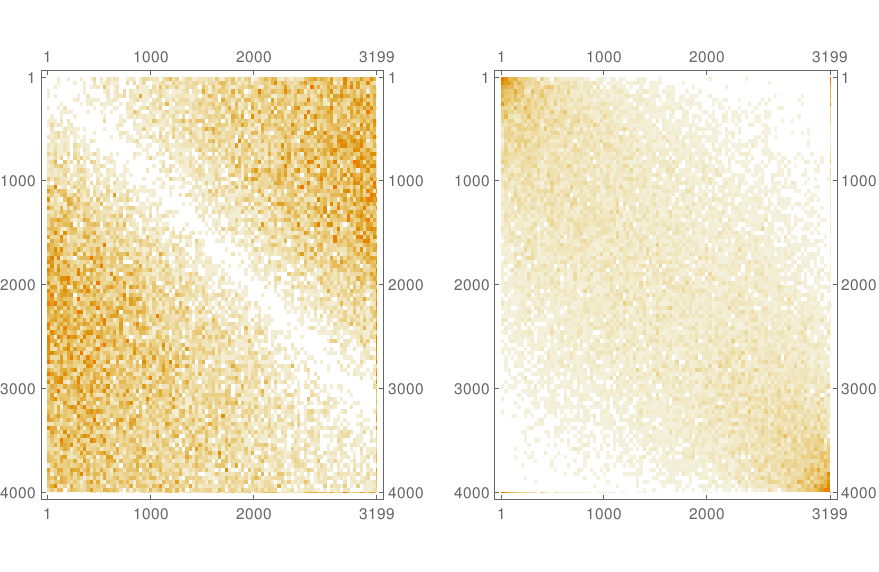}
}
\end{center}
\end{figure} 

The natural order and four point test computations have been implemented 
both in a \textit{Mathematica} prototype and in a performant Java 10
package called \texttt{QuantifyBipartiteBlockStructure}.

We simulated some $k$-regular random hypergraphs on $n$ vertices, where
the $k$ vertices in hyperedge $i$ were not picked uniformly, but were a weighted
sample using weight $1 + |i - j|$ for vertex $j$, which tends 
to force incidences away from the diagonal. For $s \in \{50, 100, 200, 400\}$,
we simulated two instances of such random hypergraphs for parameter choices
$k = 7$, $n = 10 s$, $m = 8 s$. Empty columns were discarded. Figure \ref{f:naturalorder}
shows one of the largest matrices, both before and after the natural order computation.

The four point total variation score (\ref{e:imbalance})  was always in the $0.25 < D_4 < 0.27$
range for the raw matrix, and in the $0.42 < D_4 < 0.44$ range for the naturally ordered matrix,
regardless of scale. $D_4$ varied as much between two matrices of the same size as it did between
two matrices of different sizes.
This suggests the possibility of proving limit results for values of $D_4$
as $n, m \to \infty$ under suitable assumptions about the matrix generation mechanism.

The four point chi-squared statistic
(\ref{e:4ptteststat}) appears to scale in proportion to $s$ in these examples. Karl Rohe \cite{roh} wrote an implementation which reveals
 that the significance level
of the $\chi^2$ statistic is incorrect if all
edges of the graph are used both (1) to derive
an approximate Fiedler vector, and (2) to perform the
Four Point Test. This weakness can be avoided by
partitioning
the edge set randomly into two subsets,
one of which is used for the former, and the other
for the latter.

\begin{table}
\footnotesize
\centering
\begin{tabular}{ | l | l | l |l | l | l | l | l | l | }
\hline
Review Set &  \# edges & \# left & \# right & TV & giant & TV-NO & 4PT &  N.O. \\ \hline
Dig. Music &  0.836M &  0.478M &  0.266M &  \textbf{0.596} &  0.703M &  0.507 &  0.484s &  7.86s \\
Android&  2.64M &  1.32M &  61.3K &  \textbf{0.487} &  2.63M &  0.350 &  1.47s &  18.9s \\
Movies/TV &  4.607M &  2.089M &  0.201M &  \textbf{0.418} &  4.573M &  0.321 &  2.86s &  57.8s \\
Electronics &  7.824M &  4.20M &  0.476M &  \textbf{0.492} &  7.73M &  0.397 &  3.44s &  76.8s \\
Books &  22.5M &  8.03M &  2.33M &  0.308 &  22.3M &  \textbf{0.349} &  14.6s &  226s \\
\hline
\end{tabular}
\caption{\textit{Four point test applied to five Amazon Reviews data sets, both
before and after natural ordering of left and right vertex sets.} The number of edges,
and the number in the giant component (in millions) are shown in columns 1 and 5, respectively.
Here TV and TV-NO refer to the total variation statistic (\ref{e:imbalance}) computed in the original ordering, and in the natural ordering (on the giant component), respectively.
The last two columns show execution times of single-threaded Java 10
code for the four point test, and for the natural ordering calculation, respectively.
These times scale linearly in the number of edges.
}
\label{t:amazon}
\end{table}

\subsection{Large natural order and four point test computations}

We performed four point test and natural ordering computations on five sets of
Amazon Reviews data\footnote{
jmcauley.ucsd.edu/data/amazon}, as shown in Table \ref{t:amazon}.
In all cases left vertices were reviewers, and right vertices were products
of a specific type. Reading the data took longer than performing the four
point test, whose time scaled linearly in the number of edges, as expected;
see Figure \ref{f:timings}. It is noteworthy that execution times for ratural ordering,
which typically required about 25 iterations of the power method, also scaled
linearly in the number of edges.

Only for Amazon reviews of books did the natural ordering improve the score in the four point 
total variation statistic (\ref{e:imbalance}). For the other four product categories,
 the original order yields a higher score. The high scores suggest that, for example, music
tracks fall into music genres, and reviewers of one genre do not tend to review other genres.
This effect is least for books: some reviewers may rate multiple types of literature.

\begin{figure}
\caption{\textit{Log-log plots show
execution times of the four point test and natural order computations
scale linearly on the Amazon Reviews data sets, whose sizes are shown in 
Table \ref{t:amazon}.}
} \label{f:timings}
\begin{center}
\scalebox{0.32}{\includegraphics{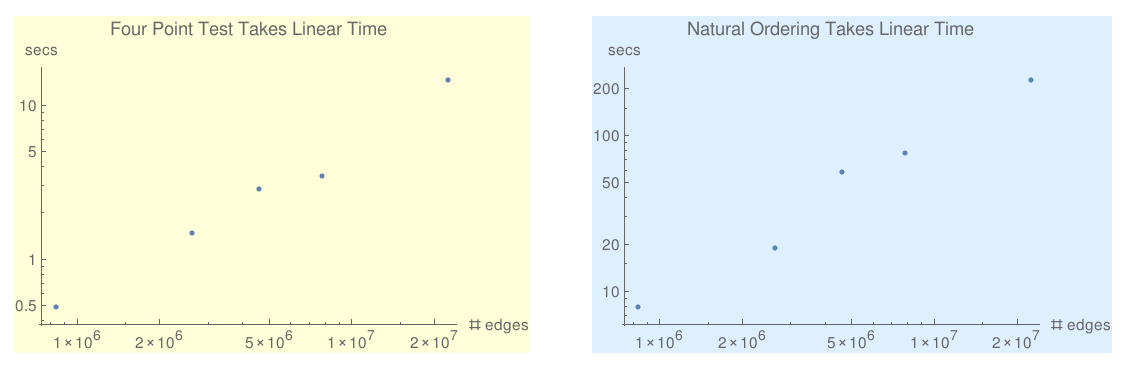}
}
\end{center}
\end{figure}

\section{Correspondence between permutations and bipartite graphs} \label{s:perm2graph}

The methods of this section lead to a proof of Theorem \ref{t:abf4}.

\subsection{Random permutation generates bipartite graph: fixed vertex degrees}\label{s:pgfix}

This section is inspired by the half-edge construction due to Wormald, and the configuration model in 
Bollob{\'a}s \cite[Section II.4]{bol}.
A totally ordered left vertex set $L:=\{u_1, u_2, \ldots, u_n\}$ and a 
totally ordered right vertex set 
$R:=\{v_1, v_2, \ldots, v_r\}$ are given. 
Fix a left vertex degree vector $\mathbf{w}:=(w_1, \ldots, w_n)$, and right vertex degree vector
 $\mathbf{d}:=(d_1, \ldots, d_m)$ in advance, where both vectors sum to $N$.
It is required that $u_i$ has degree $w_i \geq 1$,
and $v_j$ has degree $d_j \geq 1$. It is convenient to introduce the partial sums
\[
W_i:=\sum_{i'=1}^i w_{i'}; \quad D_j:= \sum_{j'=1}^j d_{j'},
\]
with $W_0 = D_0 = 0$, $W_n = D_m = N$.

Construct two sequences $h^L:=(h_{p}^L)$ and $h^R:=(h_{q}^R)$ of vertex labels,
both of length $N$, where $h_{p}^L = u_i$ when $W_{i-1} < p \leq W_i$, and 
$h_{q}^R = v_j$ when $D_{j-1} < q \leq D_j$.
Thus $h^L$ contains $w_1$ symbols referring to $u_1$, then $w_2$ symbols referring to  $u_2$,
and so on:
\[
h^L:=(\overbrace{u_1, \ldots, u_1}, \overbrace{u_2, \ldots, u_2}, \ldots,
\overbrace{u_n, \ldots, u_n}),
\]
while $h^R$ contains $d_1$ symbols referring to  $v_1$, 
then $d_2$ symbols referring to $v_2$,
and so on. 
We call $h^L$ and $h^R$ left and right \textbf{half-edge vectors},
respectively.

\begin{definition}\label{d:perm2graph}
Given left and right half-edge vectors $h^L$ and $h^R$, respectively, of length $N$,
the bipartite multigraph $B_{\pi}:=(L \cup R, E)$ induced by a permutation $\pi \in S_N$
is the graph whose edge set $E$ consists of the pairs
\begin{equation} \label{e:halfedgecon}
E:=\{(h_{p}^L, h_{\pi(p)}^R), 1 \leq p \leq N\}.
\end{equation}
\end{definition}

We estimate in Section \ref{s:duplicates} the expected number of duplicate edges in $E$.
Blanchet \& Stauffer \cite{bla} give necessary and sufficient conditions,
also proved in Janson \cite{jan2}, for
the asymptotic probability of obtaining a simple graph to be positive.

The following lemma is nearly a tautology, given the construction (\ref{e:halfedgecon}).

\begin{lemma}\label{l:abffromperm}
Suppose for each $k \geq 1$, $h^{L(k)}$ and $h^{R(k)}$ are left and right half-edge vectors
of the same length $N(k)$, where $L(k)$ and $R(k)$ are the sets of distinct labels
occurring in the respective vectors. Take $B_{\pi}(k)$ to be bipartite (multi)graph
on $L(k) \cup R(k)$ induced, as in (\ref{e:halfedgecon}), by a 
uniform random permutation $\pi_k \in S_{N(k)}$. 
If $|L(k)| \to \infty$, $|R(k)| \to \infty$, and $N(k) \to \infty$. 
then $(B_{\pi}(k))$ is asymptotically block-free.
\end{lemma}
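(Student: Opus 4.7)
The plan is to reduce the statement to a short combinatorial identity and in fact to establish the stronger claim that, for each fixed $s$ with $N(k) \geq s$, the induced permutation is \emph{exactly} uniform on $\S_s$; the hypothesis $N(k) \to \infty$ then delivers the condition of Definition \ref{d:abfos} for every $s \geq 2$. First I would observe that construction (\ref{e:halfedgecon}) indexes the edges of $B_\pi$ bijectively by positions $p \in [N]$, so drawing $s$ distinct edges uniformly without replacement from the multiset $E$ is equivalent to drawing a uniform random $s$-subset $P = \{p_1 < \cdots < p_s\}$ of $[N]$ independently of $\pi$. Since $h^L$ is non-decreasing in position, the sample is already in left-sorted order (up to ties within a single left-vertex block), so the $\sigma \in \S_s$ of Definition \ref{d:stoperm} is determined by the right-endpoint sequence $(h^R_{\pi(p_i)})_{i=1}^s$ together with uniform tie-breaking.

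Next I would condition on the image set $Q = \pi(P)$. By uniformity of $\pi$, $Q$ is uniform over $s$-subsets of $[N]$, and given $Q = \{q_{(1)} < \cdots < q_{(s)}\}$ the restriction $\pi|_P$ is a uniform random bijection $P \to Q$; equivalently, $\pi(p_i) = q_{(\rho(i))}$ for a uniform random $\rho \in \S_s$. The non-decreasing map $p \mapsto h^R_p$ partitions the sorted sequence $h^R_{q_{(1)}} \leq \cdots \leq h^R_{q_{(s)}}$ into tied blocks of sizes $m_1, \ldots, m_k$, and the block-preserving permutations form a Young subgroup $T = \S_{m_1} \times \cdots \times \S_{m_k} \subset \S_s$ of order $m_1! \cdots m_k!$. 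A short unpacking of Definition \ref{d:stoperm} shows that, given $\rho$, the permutations $\sigma$ consistent with the required right-sorted condition are precisely those with $\rho \sigma \in T$, a left coset $\rho^{-1} T$ of size $|T|$, from which $\sigma$ is drawn uniformly.

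For any fixed $\sigma^* \in \S_s$, summing over $\rho$ then yields
\[
\P[\sigma = \sigma^* \mid Q] = \frac{1}{s!} \cdot \frac{1}{|T|} \cdot |\{\rho \in \S_s : \rho \sigma^* \in T\}| = \frac{|T|}{s! \, |T|} = \frac{1}{s!},
\]
because $\{\rho : \rho \sigma^* \in T\} = T (\sigma^*)^{-1}$ is a right coset of $T$ of cardinality $|T|$. Integrating over $Q$ preserves this value, so $\sigma$ is uniform on $\S_s$ whenever $N \geq s$, and $N(k) \to \infty$ gives the lemma. The main obstacle, which at first looks serious, is the tie-breaking: one might fear that a high-degree right vertex biases $\sigma$ toward certain patterns. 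The uniform tie-breaking in Definition \ref{d:stoperm} precisely cancels this bias through the coset argument above, so no assumption on the degree sequences beyond $N(k) \geq s$ is required.
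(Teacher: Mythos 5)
Your argument is correct and follows essentially the same route as the paper's own proof: because $h^L$ is already sorted, uniformity of $\pi$ makes the ordering of the sampled right endpoints uniform, so the induced pattern is exactly uniform on $\S_s$ whenever $N(k)\geq s$, which gives block-freeness of every order. The only difference is that you make the tie-breaking in Definition \ref{d:stoperm} rigorous via the Young-subgroup coset count, a detail the paper's brief proof leaves implicit.
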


\begin{proof}
Fix $s \geq 2$. For any $k$ such that $N(k) \geq s$,
select $s$ edges uniformly at random, say
\[
\{(h_{p_1}^L, h_{\pi(p_1)}^R), \ldots, (h_{p_s}^L, h_{\pi(p_s)}^R) \},
\]
where for brevity we have dropped the index $k$ from the notation.
The left endpoints $(h_{p_1}^L, \ldots, h_{p_s}^L)$ are already in increasing order.
Since the permutation $\pi$ is uniformly random, the $s$ right endpoints
$(h_{\pi(p_1)}^R, \ldots, h_{\pi(p_s)}^R)$ are ordered 
uniformly at random. Thus the every $s$, the sequence $(B_{\pi}(k))$ is 
asymptotically block-free of order $s$.
\end{proof}

\subsection{Inversion of the half edge construction} \label{s:inversion}
We shall now describe a way to invert Definition \ref{d:perm2graph}, so as to be able to produce a
permutation of $N$ symbols from a bipartite graph with $N$ edges. This will be 
used in the proof of Theorem \ref{t:abf4}.

Let us elaborate on the construction of total orders on edges,
introduced in Section \ref{s:4pttest}.
Fix an arbitrary \textbf{total order} $\prec_L$ on the edges, $e_1 \prec_L \cdots \prec_L e_N$,
with the property that, for all $1 \leq i < i' \leq n$,
\begin{equation} \label{e:leftorder}
(u_i, v) \prec_L (u_{i'}, v'), \quad \forall v, v' \in R.
\end{equation}
In other words, the order on edges is consistent with the order on left vertices.
Next generate $N$ i.i.d. Uniform$(0,1)$ random variables $U_1', U_2', \ldots, U_N'$,
which will be used as tie breakers in the following way. Extend
the right half-edges $h^R$ above, i.e.
\[
h^R:=(\overbrace{v_1, \ldots, v_1}, \overbrace{v_2, \ldots, v_2}, \ldots,
\overbrace{v_m, \ldots, v_m})
\]
to a series of $N$ pairs
\begin{equation} \label{e:auxrandom}
(\overbrace{(v_1,U_1'), \ldots, (v_1, U_{D_1}')}, 
\overbrace{(v_2, U_{D_1 + 1}'), \ldots, (v_2, U_{D_2}')}, \ldots,
\overbrace{(v_m, U_{D_{m-1} + 1}'), \ldots, (v_m, U_{D_m}')}).
\end{equation}
This yields another \textbf{total order} $\prec_R$ on the edges, namely lexicographic ordering
 using first the ordering on the $(v_j)$, then the ordering on the $(U'_k)$.
In other words, 
\begin{equation} \label{e:rightorder}
e_p:=(u, v_j) \prec_R e_{q}:=(u', v_{j'})
\end{equation}
if either $j < j'$, or else $j = j'$ and $U'_p < U'_{q}$.

\begin{definition} \label{d:graph2perm}
Suppose $G:=(L \cup R, E)$ is a bipartite (multi)graph with $N:=|E|$ edges.
The left total order $(E, \prec_L)$ (\ref{e:leftorder}) and
the random right total order $(E, \prec_R)$ (\ref{e:rightorder})  
combine to induce a random permutation $\pi_G \in S_N$ by
\[
e_k = e'_{\pi_G(k)}
\]
where $e_1 \prec_L \cdots \prec_L e_N$ are the left-ordered edges, and
$e'_1 \prec_R \cdots \prec_R e_N'$ are the right-ordered edges
(subject to the randomization (\ref{e:auxrandom}) to break ties).
\end{definition}

From the constructions above, the following Inversion Lemma is a tautology.

\begin{lemma}[INVERSION] \label{l:inversion}
If the random permutation $\pi_G \in S_N$ of Definition \ref{d:graph2perm}
is applied in the half-edge construction of Definition \ref{d:perm2graph}
to the half-edge sequences $h^L$ and $h^R$, then the resulting edge set 
(\ref{e:halfedgecon}) coincides with 
the original edge set $E$ of the graph $G$.
\end{lemma}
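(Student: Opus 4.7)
The plan is to verify the two constructions are mutually inverse by tracking, for each position $p \in \{1,\ldots,N\}$, the left and right endpoints of the edge produced by each construction and checking they agree.

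First I would unpack the half-edge vectors in terms of the orders. By definition $h^L_p = u_i$ precisely when $W_{i-1} < p \leq W_i$, so $h^L_p$ is the left endpoint of exactly the $p$-th edge in \emph{any} total order $\prec_L$ consistent with the left vertex order (as stipulated in \eqref{e:leftorder}): the first $w_1$ edges all have left endpoint $u_1$, the next $w_2$ all have left endpoint $u_2$, and so on. Similarly, the right order $\prec_R$ constructed from \eqref{e:auxrandom} places the $d_1$ edges incident to $v_1$ first, then the $d_2$ edges incident to $v_2$, etc., with the $U'$-values only breaking ties \emph{within} each block of a fixed right vertex. Hence $h^R_q$ is the right endpoint of the $q$-th edge in the $\prec_R$ order, for every $q$.

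Next I would invoke the defining identity of $\pi_G$ from Definition \ref{d:graph2perm}: the $p$-th edge in $\prec_L$ order equals the $\pi_G(p)$-th edge in $\prec_R$ order, i.e.\ $e_p = e'_{\pi_G(p)}$. Combining with the two observations above, the left endpoint of $e_p$ is $h^L_p$ and its right endpoint is $h^R_{\pi_G(p)}$, so
\[
E = \{e_p : 1 \leq p \leq N\} = \{(h^L_p, h^R_{\pi_G(p)}) : 1 \leq p \leq N\},
\]
which is precisely the edge set produced by Definition \ref{d:perm2graph} when $\pi = \pi_G$.

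The only subtlety, and the one thing I would make explicit, is the multigraph case with ties. Ties on the left are harmless because every edge sharing left vertex $u_i$ contributes the same symbol $u_i$ to $h^L$, so the choice of refinement of $\prec_L$ does not affect the sequence of left endpoints read off in order. Ties on the right are broken by the random variables $(U'_k)$, but again these only reorder edges \emph{within} a fixed right vertex block, so the sequence of right endpoints $h^R_1, h^R_2, \ldots, h^R_N$ read off in $\prec_R$ order is deterministic; the random tie-breaking only decides \emph{which} specific parallel edge fills each slot, and this is exactly what $\pi_G$ records. I do not expect any real obstacle here---the authors call the statement a tautology, and the content is genuinely just a bookkeeping check that the $(p, \pi_G(p))$ pairing of slots agrees with the edge identification on both sides.
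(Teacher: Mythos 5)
Your proof is correct and matches the paper, which offers no written argument at all --- it simply declares the lemma a tautology of the constructions --- so your bookkeeping (left endpoints read off as $h^L_p$, right endpoints as $h^R_{\pi_G(p)}$ via $e_p = e'_{\pi_G(p)}$, with ties handled inside fixed vertex blocks) is exactly the intended unpacking. Nothing is missing.
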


\subsection{Permutation terminology: Property $P(k)$} \label{s:P(k)}

This terminology is reproduced from \cite{kra}.
Let $S_k$ consist of permutations on $[k] :=
\{1, . . . , k\}$. 
We view each $\pi \in S_k$ as a bijection $\pi: [k] \to [k]$, and we say that the length of $\pi$ is $k$.
 For $\pi \in S_k$ and $\tau \in S_m$ with $k \leq m$, let $t(\pi, \tau)$
 be the probability that a random $k$-point subset $X$ of $[m]$ 
induces a permutation isomorphic to $\pi$ (that is,
$\tau(x_i) \leq \tau(x_j)$ iff $\pi(i) \leq \pi(j)$ where $X$ consists of 
$x_1 < \ldots < x_k$). A sequence $\{\tau_j \}$
of permutations is said to have Property $P(k)$ if their lengths tend to $\infty$ and 
$t(\pi, \tau_j) = 1/k! + o(1)$ for every
$\pi \in S_k$ . It is easy to see that $P(k + 1)$ implies $P(k)$.

\subsection{Proof of Theorem \ref{t:abf4}}

Take a sequence
$(G_k):=((L_k \cup R_k, E_k))$ of random bipartite (multi)graphs,
where $|L_k| \to \infty$, $|R_k| \to \infty$, 
and $|E_k| \to \infty$,
which is asymptotically block-free of order 4.

Apply Definition \ref{d:graph2perm} to convert each graph $G_k$ into
a random permutation $\pi_{G_k}$ of $|E_k|$ symbols.
From asymptotically block-freeness of order 4, and the auxiliary
randomization (\ref{e:auxrandom}), it follows that Property $P(4)$
holds for the sequence $(\pi_{G_k})$, in the sense of Section \ref{s:P(k)}.
Theorem 1 of Kr{\'a}l$'$ \& Pikhurko \cite{kra} shows that Property $P(s)$ holds
for all $s$. Together with Lemma \ref{l:inversion}, this implies $(G_k)$ is asymptotically block-free of order $s$,
for all $s \geq 2$, as desired.


\section{Open problems}

In this new area of research, many topics remain to be explored.

\subsection{Directed non-bipartite graphs }
The four point test computation of Section \ref{s:4pttest} makes sense not only for bipartite graphs,
but for any directed graph on an ordered vertex set. What exactly is the scope and
meaning of the test statistics (\ref{e:4ptteststat}), (\ref{e:imbalance}) in the directed non-bipartite case?

\subsection{Vertex exchangeability and edge exchangeability}

 Caron \& Fox \cite{car} present constructions of random bipartite graphs
where the left vertices are exchangeable, and the right vertices are exchangeable.
A general case is described by
Borgs, Chayes, Cohn and Holden \cite{cha}.
Cai, Campbell \& Broderick \cite{cai} and
Crane \& Dempsey \cite{cra} have defined the notion of
an edge-exchangeable graph sequence.
What happens when one applies the four point test to
vertex-exchangeable or edge-exchangeable graph sequences?

\subsection{Minimum degree instead of natural order}\label{s:mindeg}
The natural order defined in Section \ref{s:naturalorder} is neither the only,  
nor the cheapest, approach to ordering rows and columns of a sparse matrix in
order to expose something resembling block structure. For example, Duff et al \cite{duf}
describe the minimum degree algorithm. This starts with all rows declared active, and terminates
when no active rows remain. Active degree of column $v$ means the number of incidences of column
$v$ with active rows.
Iterate as follows:
\begin{enumerate}
\item
Select some column $v$ uniformly at random from those of minimum non-zero 
active degree, and place it next in the column ordering.
\item
Active rows incident to $v$ are placed next in the row ordering, and are then declared inactive.
\item
Update active degrees of columns by subtracting counts of incidences with newly inactive rows.
\end{enumerate}
Column labels left over when active rows are exhausted are placed in arbitrary order, after the others.
We would like to know whether applying minimum degree to some kinds of sparse matrices leads to
higher or lower 4PT-TV scores than applying natural order.

\subsection{Discrepancy measures in bipartite graphs}

Given vertex sets $U \subset L$ and $V \subset R$
in a directed bipartite graph $G:=(L \cup R, E)$,
let $Z_{U,V}$ count the set $E(U,V) \subset E$ of edges between $U$ and $V$:
\[
Z_{U,V}:= \sum_{i: u_i \in U} \sum_{j: v_j \in V} Z_{i,j} = |E(U,V)|.
\]
The total degree of vertices in $U$, and in $V$, respectively, is
\[
W(U):=Z_{U,R}; \quad D(V):=Z_{L,V}.
\]
Motivated by the notion of discrepancy, which gives one of the equivalent definitions
 of a quasirandom permutation \cite{kra},
define the \textbf{discrepancy} in the bipartite graph $G$ to be the random variable
\[
\Delta(G);=\max_{U \subset L, V \subset R} \left|
Z_{U,V} - \frac{W(U) D(V)}  { N }
\right|.
\]
The open problem is to give computable bounds on the discrepancy of a sequence
of random bipartite graphs which are asymptotically block-free in the sense of Definition
\ref{d:abfos}.
Possibly such bounds may be derived from concentration inequalities such as 
are found in Janson \cite[Theorem 8]{jan}.

\subsection{Relation to quasirandom hypergraphs}
Quasirandom hypergraphs are those which have the properties one would expect to find in ``truly'' random hypergraphs, in which a $k$-edge contains $k$ vertices selected uniformly without replacement,
and all $k$-edges are statistically independent.
Shapira \& Yuster \cite{sha}, Lenz and Mubayi \cite{len}, \cite{len2}, 
and other authors cited therein, study quasirandomness
in sequences $(H_n)$ of dense $k$-uniform hypergraphs, meaning that, for some $p \in (0,1)$,
the number $|E(H_n[U])|$ of hyperedges with vertices inside any $U\subset V(H_n)$ is
\[
p \binom{|U|}{k} + o(n^k).
\]
The study of quasirandom structures lies at the core of recent proofs of Szemer{\' e}di's Theorem (see \cite{bol0}) 
obtained by Gowers, and by R{\" o}dl et al.
We would like to clarify how this theory of dense quasirandom hypergraphs
interacts with the approach to sparse hypergraphs (viewed in terms of bipartite graphs and
quasirandom permutations) that we have taken here.

\appendix


\section{Appendix: likelihood ratio statistic for sparse binary contingency tables}

\subsection{Purpose}
This section is intended to assuage the concerns of statisticians
for whom tests of association in binary contingency table necessarily involve
the likelihood ratio statistic.
Koehler \cite{koe} considers the problem of testing for independence of rows and columns
in a sequence of expanding two-dimensional contingency tables,
where the $k$-th table in the sequence has $N_k$ entries distributed among
$r(k)$ rows and $c(k)$ columns. By contrast, traditional contingency table analysis
considers a table of fixed dimensions as sample size increases; see Agresti \cite{agr}.

We will see that the likelihood ratio statistic detects association
between row degree and column degree in a binary contingency table, 
and its variance detects non-uniform incidence rates, but it
does not detect block structure in sparse tables, as the Section
\ref{s:orderimportance} examples will show.

\subsection{Likelihood ratio statistic in the Bernoulli matrix model}

For simplicity consider first the \textbf{Bernoulli matrix model} of a binary contingency table, where
incidence $Z_{i,j}=1_{u_i \sim v_j}$ is Bernoulli$(\gamma_{i,j})$, for some constants $(\gamma_{i,j})$
with values in $[0, 1]$. Let $K$ denote the set of index pairs $(i,j)$ for which $\gamma_{i,j} > 0$.
Define a log odds ratio
\begin{equation} \label{e:logodds}
\lambda_{i,j}:=\log{\frac{1 - \gamma_{i,j}}{\gamma_{i,j}}}, \quad (i,j) \in K.
\end{equation}
and a normalizing constant
\begin{equation} \label{e:normalizellf}
\Omega:= \Omega(\gamma_{i,j}):=  \sqrt{\left(
\sum_{(r,s) \in K} \gamma_{r,s} (1 - \gamma_{r,s}) \lambda_{r,s}^2 
\right)}.
\end{equation}
The null hypothesis $H_0$ states that the $(Z_{i,j})$ are independent.
The standardized version of the likelihood ratio statistic for testing $H_0$ is
\begin{equation} \label{e:edgellf}
\xi:= \sum_{(i, j) \in K} \xi_{i,j}; \quad
\xi_{i,j}:=\frac{\lambda_{i,j} } { \Omega } (Z_{i,j} - \gamma_{i,j}).
\end{equation}
This is an affine function of the vector 
of log likelihoods for the $(Z_{i,j})$, as we see from the 
following elementary Lemma:
\begin{lemma} \label{l:loglik}
Suppose $Z\sim$Bernoulli($\theta$) with $0 \leq \theta < 1/2$. Take
\[
X:=(Z-\theta) \log{\frac{1-\theta} {\theta}}= L(Z) - H(Z),
\]
where 
\(
L(Z) = - Z \log{\theta} - (1-Z) \log{(1-\theta)}
\)
is the negative log likelihood,
and $H(Z)$ is the Shannon entropy of $Z$. Then $\E[X]=0$, and $|X| \leq - log{\frac{1-\theta}{\theta}}$. 
\end{lemma}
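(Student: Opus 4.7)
The plan is to verify the claimed identity $X = L(Z) - H(Z)$ by direct algebraic expansion, then read off $\E[X] = 0$ from $\E[Z] = \theta$, and finally bound $|X|$ by noting that $|Z - \theta|$ attains only two values.

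First I would write out $L(Z) - H(Z)$ explicitly. Using $L(Z) = -Z\log\theta - (1-Z)\log(1-\theta)$ and $H(Z) = -\theta\log\theta - (1-\theta)\log(1-\theta)$, grouping the $\log\theta$ and $\log(1-\theta)$ terms separately gives
\[
L(Z) - H(Z) = (\theta - Z)\log\theta + (Z - \theta)\log(1-\theta) = (Z-\theta)\log\tfrac{1-\theta}{\theta},
\]
which is the desired identity. This is the step that justifies the factorization of the standardized log-likelihood increment $\xi_{i,j}$ in (\ref{e:edgellf}) into a centered Bernoulli indicator times a deterministic log-odds factor.

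Second, taking expectations, $\E[X] = \log\tfrac{1-\theta}{\theta} \cdot \E[Z-\theta] = 0$ because $\E[Z] = \theta$. For the deterministic bound, I would observe that $Z - \theta$ takes only the two values $-\theta$ and $1-\theta$, so
\[
|Z-\theta| \leq \max(\theta, 1-\theta) = 1-\theta \leq 1,
\]
where the last equality uses $\theta < 1/2$. Multiplying by $\log\tfrac{1-\theta}{\theta} \geq 0$ (again using $\theta < 1/2$) yields $|X| \leq (1-\theta)\log\tfrac{1-\theta}{\theta} \leq \log\tfrac{1-\theta}{\theta}$, which is the stated bound (up to the sign convention on the logarithm factor, since $\log\tfrac{1-\theta}{\theta}$ is nonnegative in this regime).

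There is no real obstacle here: the whole lemma is a one-line algebraic rearrangement together with a trivial range bound on a two-valued random variable. The only subtle point worth flagging in the exposition is the sign: the hypothesis $\theta < 1/2$ ensures $\log\tfrac{1-\theta}{\theta} \geq 0$, so the bound on $|X|$ is genuinely a positive quantity, and the two-valued nature of $Z$ makes the inequality tight when $Z = 1$ up to the factor $1-\theta$.
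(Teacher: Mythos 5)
Your proof is correct and is exactly the elementary computation the paper has in mind: the lemma is stated without proof (``the following elementary Lemma''), and your expansion of $L(Z)-H(Z)$, the mean-zero observation, and the two-valued range bound $|Z-\theta|\le 1-\theta\le 1$ supply precisely the intended argument. You are also right to flag the sign: since $\log\frac{1-\theta}{\theta}\ge 0$ for $\theta<1/2$, the paper's bound should read $|X|\le\log\frac{1-\theta}{\theta}$ (equivalently $-\log\frac{\theta}{1-\theta}$), and your derivation gives the slightly sharper $(1-\theta)\log\frac{1-\theta}{\theta}$.
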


For convenience in normal approximation, the
log likelihood $\xi$ is scaled so $\E[\xi] = 0$, $\mbox{Var}[\xi] = 1$.

\begin{proposition}[ASYMPTOTIC NORMALITY]\label{p:clt-bmm}
Consider a sequence of Bernoulli matrix models $(Z^k_{i,j})$ as $k \to \infty$, with
$Z^k_{i,j} \sim$ Bernoulli$(\gamma^k_{i,j})$, and index sets $K^k:=\{(i,j): \gamma^k_{i,j} > 0\}$.
Suppose the
log odds ratio $\lambda_{i,j}^k$ as in  (\ref{e:logodds}), divided by the normalizing constant
(\ref{e:normalizellf}), has the property that, as $k \to \infty$,
\begin{equation}\label{e:maxrate}
 \frac{ \max_{(i,j) \in K^k} { \{\lambda_{i,j}^k \} } }{\Omega(\gamma_{r,s}^k)} \to 0.   
\end{equation}
Then the random variables $( \xi_{i,j}^k)$ in (\ref{e:edgellf}) satisfy
Lindeberg's condition:
\begin{equation} \label{e:lindeberg}
\sum_{(i,j) \in K^k} \E[\left( \xi_{i,j}^k \right)^2 ; |\xi_{i,j}^k | > \epsilon] \to 0.
\quad \forall \epsilon > 0.
\end{equation}
Hence the rescaled likelihood ratio statistic $\xi^k$ in (\ref{e:edgellf}) converges in distribution
to standard normal by Lindeberg's central limit theorem.
\end{proposition}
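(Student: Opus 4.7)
The plan is to verify the hypotheses of Lindeberg's central limit theorem for the triangular array $\{\xi_{i,j}^k : (i,j) \in K^k\}$ of independent mean-zero random variables. Three ingredients are needed: (i) mean zero, (ii) variances summing to one, and (iii) the Lindeberg negligibility condition (\ref{e:lindeberg}). The first two are built directly into the normalization $\Omega$; the third is essentially free once one exploits the fact that each $\xi_{i,j}^k$ is a bounded random variable, with an explicit uniform bound driven by hypothesis (\ref{e:maxrate}).

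First I would record that $\E[\xi_{i,j}^k] = 0$ because $\E[Z_{i,j}^k] = \gamma_{i,j}^k$, and that the sum of variances is
\[
\sum_{(i,j) \in K^k} \mbox{Var}[\xi_{i,j}^k] \;=\; \frac{1}{\Omega(\gamma^k)^2} \sum_{(i,j) \in K^k} (\lambda_{i,j}^k)^2 \gamma_{i,j}^k (1-\gamma_{i,j}^k) \;=\; 1
\]
by the definition (\ref{e:normalizellf}) of $\Omega$. Hence the variance of the partial sum $\xi^k$ is identically one, matching the scaling required in the conclusion.

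The heart of the argument is the trivial pointwise bound $|Z_{i,j}^k - \gamma_{i,j}^k| \leq 1$, which gives the deterministic estimate
\[
|\xi_{i,j}^k| \;\leq\; \frac{|\lambda_{i,j}^k|}{\Omega(\gamma^k)} \;\leq\; M_k, \qquad M_k \;:=\; \frac{\max_{(i,j) \in K^k} |\lambda_{i,j}^k|}{\Omega(\gamma^k)}.
\]
Hypothesis (\ref{e:maxrate}) is precisely the statement $M_k \to 0$, so for any fixed $\epsilon > 0$ and all sufficiently large $k$ we have $|\xi_{i,j}^k| \leq M_k < \epsilon$ almost surely, uniformly in $(i,j)$. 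The indicator $1_{\{|\xi_{i,j}^k| > \epsilon\}}$ then vanishes identically, and every term in the Lindeberg sum (\ref{e:lindeberg}) is zero. Thus (\ref{e:lindeberg}) holds trivially, and the classical Lindeberg theorem delivers $\xi^k \Rightarrow N(0,1)$.

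There is no real obstacle beyond bookkeeping: the bounded increments of Bernoulli variables turn (\ref{e:maxrate}) directly into a uniform asymptotic negligibility statement, so the usual truncated second-moment integrals are unnecessary. The only subtlety worth flagging is the sparse-table convention implicit in Lemma \ref{l:loglik} that $\gamma_{i,j}^k < 1/2$, which makes $\lambda_{i,j}^k > 0$ so that $\max \lambda_{i,j}^k$ agrees with $\max |\lambda_{i,j}^k|$; otherwise the absolute value should be read into the numerator of (\ref{e:maxrate}).
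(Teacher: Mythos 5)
Your proof is correct and follows essentially the same route as the paper: mean zero and unit total variance from the definition of $\Omega$, the deterministic bound $|\xi_{i,j}^k| \leq |\lambda_{i,j}^k|/\Omega$, and the observation that (\ref{e:maxrate}) forces every term of the Lindeberg sum to vanish for large $k$, after which the classical Lindeberg CLT finishes the argument. Your remark about reading an absolute value into the numerator of (\ref{e:maxrate}) (or invoking the $\gamma_{i,j}^k < 1/2$ convention of Lemma \ref{l:loglik}) is a sensible clarification of a point the paper leaves implicit.
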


\textbf{Remark: } Compare the assumption (\ref{e:maxrate}),
where cell frequencies tend to zero, and
indeed may be $O((r(k) + c(k))^{-1})$, with
 Koehler's \cite{koe}, wherein all expected cell frequencies are
bounded below by a strictly positive constant as $k \to \infty$.

\begin{proof}
The right side of (\ref{e:edgellf}) is
a sum of independent random variables, and this sum has mean zero and variance 1. 
The final assertion about the central limit theorem follows from
Kallenberg \cite[Theorem 5.12]{kal}, once we have verified 
\textit{Lindeberg's condition} (\ref{e:lindeberg}).

For brevity, drop the superfix $k$ from the notation, and study a fixed $k$.
Let 
\[
\delta:= \max_{(i,j) \in K} \frac{\lambda_{i,j}}{\Omega}.
\]
Since $Z_{i,j} \sim$ Bernoulli$(\gamma_{i,j})$, and 
$\xi_{i,j}:=\lambda_{i,j} (Z_{i,j} - \gamma_{i,j})/\Omega $,
it follows that
\[
|\xi_{i,j} | \leq |\lambda_{i,j} / \Omega|  \leq \delta.
\]
Suppose $\epsilon > 0$ is fixed. Choose $k$ sufficiently large that
\[
\max_{(i,j) \in K^k} \frac{\lambda_{i,j}^k}{\Omega(\gamma_{r,s}^k)} < \epsilon.
\]
For such $k$, we have $ \delta < \epsilon$, and hence
\[
\E[\left( \xi_{i,j}^k \right)^2 ; |\xi_{i,j}^k | > \epsilon] = 0, \quad \forall (i,j).
\]
Thus (\ref{e:lindeberg}) is established.
\end{proof}

\subsection{Examples to show log likelihood fails to detect blocks} \label{s:lrs2models}
Recall the models of Sections \ref{s:bmm-noblocks} and \ref{s:bmm-blocks}.
They were designed as Bernoulli matrix models in which the Bernoulli
parameters $\gamma/\alpha$, $\gamma/(1 - \alpha)$, and 0 appear with
similar frequencies, but with different structural organization. Let us study the
likelihood ratio statistic for the two models.

There are two cases to consider, depending on whether the matrices $(\gamma_{i,j})$ and
$(\gamma'_{i,j})$ of Bernoulli parameters for the two models are known or unknown.

\textbf{Parameters known: }
 Consider the ingredients (\ref{e:logodds}), (\ref{e:normalizellf}), (\ref{e:edgellf})
from which the likelihood ratio statistic is derived. These ingredients are 
essentially the same in models of Sections \ref{s:bmm-noblocks} and \ref{s:bmm-blocks},
the only difference being in the ordering of labels, which is irrelevant when summing.
Model \ref{s:bmm-blocks} has block structure while model \ref{s:bmm-noblocks} does not. 
Knowledge of the parameter matrix $(\gamma'_{i,j})$ reveals block structure, but the
likelihood ratio statistic itself does not reveal block structure.

\textbf{Parameters unknown: } Given a pair of incidence matrices $(Z_{i,j})$ and $(Z'_{i,j})$,
generated according to models \ref{s:bmm-noblocks} and \ref{s:bmm-blocks} respectively,
the statistician will observe that, for both matrices,
 the column totals look like samples from Binomial$(n, \gamma)$,
while row totals look like samples from Binomial$(m, \gamma)$, 
where the unknown parameter could be estimated as 
$\hat{\gamma}:=\sum_{i,j}Z_{i,j}/(mn)$ or $\sum_{i,j}Z'_{i,j}/(mn)$.
The statistician will then compute the
likelihood ratio statistic $\xi$, as in (\ref{e:edgellf}), based on the 
model $\gamma_{i,j} = \hat{\gamma}$ for all $i, j$. Some cancellation occurs,
and
\[
\xi = \frac{1}{ \sqrt{  m n \hat{\gamma} (1 - \hat{\gamma} ) } } 
\sum_{i,j} (Z_{i,j} - \hat{\gamma}).
\]
This has mean zero, by choice of $\hat{\gamma}$,
and the question comes down to testing for excessive variance.
For example, one could test whether
\[
\sum_{i,j} \frac{(Z_{i,j} - \hat{\gamma})^2} { \hat{\gamma} (1 - \hat{\gamma} ) }
\]
exceeds the $1 -p$ quantile of $\chi^2(m n -1)$, for a size $p$ test.
The test results for models \ref{s:bmm-noblocks} and \ref{s:bmm-blocks}
will be similar; rejection of the hypothesis that all entries are i.i.d. Bernoulli
is likely in both cases. The fact that model \ref{s:bmm-blocks} has block structure,
whereas model \ref{s:bmm-noblocks} does not, is not discovered by this test.

\subsection{Regular bipartite subgraphs}
The special case of a $(w,d)$-regular bipartite graph is the one where every left vertex has degree $w$, every right vertex has degree $d$, and thus $w n = N = d m$.

Consider a sparse case where $n, m$ are large, $N = O(\max{ \{m ,n\} })$, and
the set of distinct values of the $(w_i)$ and $(d_j)$ is
$O(\log{N})$. It makes sense to view bipartite graph $G$ as a collection
 of regular bipartite graphs $\{(L_w \cup R_d, E_{w,d})_{w,d}\}$,
where $L_w$ consists of left vertices of degree $w$,
$R_d$ consists of left vertices of degree $d$, and
$E_{w,d}$ consists of edges $(u_i, v_j)$ with $w_i = w$, $d_j = d$. Thus
\[
L:=\bigcup_w L_w; \quad
R:=\bigcup_d R_d; \quad
E:= \bigcup_{w,d} E_{w,d}
\]
This is equivalent to organizing the 0-1 matrix $Z$ into blocks, according to
row sum and column sum.
Fix a row weight $w$ and column degree $d$. The total number of incidences in
the $(w,d)$ block is $N_{w,d}$ as in (\ref{e:jointdegmat}).
These totals may be expressed in a contingency table of the form:
\begin{center}
\begin{tabular}{l | r |r | r}
 & $d = 2$ & $d = 3$ & $\cdots$ \\ \hline
$w = 2$ & $N_{2,2}$ & $N_{2,3}$  & $\cdots$ \\ \hline
$w = 3$ & $N_{3,2}$ & $N_{3,3}$  & $\cdots$ \\ \hline
$\cdots$  & $\cdots$  & $\cdots$   & $\cdots$ \\ \hline
\end{tabular}
\end{center}

\subsection{Association of left and right vertex degrees}

Suppose the $(\gamma_{i,j})$ are unknown.
For a cell $(i,j)$ with $w_i = w$, $d_j = d$, we could estimate
\begin{equation} \label{e:marginalmodel}
\gamma_{i,j}:=\frac{w d}{N};.
\end{equation}
Let $r_w$ count rows of weight $w$, and let $c_d$ count columns of degree $d$.
Recall that $N_{w,d}$ counts the edges in the bipartite graph $G$ whose left endpoint 
has degree $w$, right endpoint degree $d$, as in (\ref{e:jointdegmat}).
Proposition \ref{p:degreevsweight} is an elementary consequence of the definitions
(\ref{e:logodds}), (\ref{e:normalizellf}), (\ref{e:edgellf}).

\begin{proposition}\label{p:degreevsweight}
The likelihood ratio statistic $\xi$ in (\ref{e:edgellf}) can be written as
\begin{equation} \label{e:llfgrouped}
\xi = \sum_{w, d} \frac{\mu_{w,d} \hat{N}_{w,d}}{\Omega} ;
\quad
\hat{N}_{w,d} := N_{w,d} - \frac{w r_w d c_d} {N}
\end{equation}
where
\[
\mu_{w,d}:=\log{\frac{N - w d}{w d} }; \quad
\Omega^2:=\sum_{w, d} r_w c_d \frac{w d}{N} \left( 1 - \frac{w d}{N} \right) \mu_{w,d}^2.
\]
\end{proposition}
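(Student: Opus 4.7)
The plan is to show that under the marginal-model estimate (\ref{e:marginalmodel}), all quantities appearing in the definitions (\ref{e:logodds})--(\ref{e:edgellf}) are constant on each ``degree class'' $\{(i,j): w_i = w,\, d_j = d\}$, and that regrouping the sums over these classes delivers exactly the stated identity. There is no hard step here; the argument is essentially a change of summation order, and the only substantive ingredient is the definition (\ref{e:jointdegmat}) of the joint degree matrix.

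First I would substitute $\gamma_{i,j} = w_i d_j/N$ into (\ref{e:logodds}) to observe
\[
\lambda_{i,j} \;=\; \log \frac{1 - w_i d_j/N}{w_i d_j/N} \;=\; \log \frac{N - w_i d_j}{w_i d_j} \;=\; \mu_{w_i,\, d_j},
\]
so that $\lambda_{i,j}$ depends on $(i,j)$ only through the degree pair $(w_i, d_j)$. The index set $K$ decomposes as a disjoint union of blocks $K_{w,d} := \{(i,j) : w_i = w,\, d_j = d\}$, with $|K_{w,d}| = r_w c_d$ and $\gamma_{i,j}, \lambda_{i,j}$ constant on each block.

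Next I would plug this decomposition into (\ref{e:normalizellf}): each block contributes $r_w c_d \cdot \frac{wd}{N}\bigl(1 - \frac{wd}{N}\bigr)\mu_{w,d}^2$, and summing over $(w,d)$ reproduces the claimed formula for $\Omega^2$. Similarly, in (\ref{e:edgellf}) the factor $\lambda_{i,j}/\Omega$ pulls out of the inner sum over $K_{w,d}$, leaving
\[
\xi \;=\; \sum_{w,d} \frac{\mu_{w,d}}{\Omega} \Bigl( \sum_{(i,j) \in K_{w,d}} Z_{i,j} \;-\; \sum_{(i,j) \in K_{w,d}} \gamma_{i,j} \Bigr).
\]
By (\ref{e:jointdegmat}) the first inner sum equals $N_{w,d}$, and the second inner sum equals $r_w c_d \cdot wd/N$, giving precisely $\hat N_{w,d}$ as defined.

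The only mild subtlety is bookkeeping on boundary cases: degree classes with $r_w = 0$ or $c_d = 0$ contribute nothing and can be dropped, and the marginal estimate (\ref{e:marginalmodel}) must lie in $(0,1)$ for $\mu_{w,d}$ to be defined, which is automatic in the sparse regime of interest where $wd < N$. No other obstacle arises; the proposition follows by collecting terms.
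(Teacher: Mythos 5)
Your proof is correct and follows exactly the route the paper intends: the paper offers no written argument beyond asserting that the proposition is an ``elementary consequence'' of (\ref{e:logodds}), (\ref{e:normalizellf}), (\ref{e:edgellf}), and your substitution of the marginal estimate (\ref{e:marginalmodel}), constancy of $\lambda_{i,j}=\mu_{w,d}$ on each degree class, and regrouping via (\ref{e:jointdegmat}) is precisely that omitted computation.
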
 

\textbf{Application: } Consider the null hypothesis $H_0$ that $(Z_{i,j})$ is a Bernoulli matrix
whose parameters follows the model (\ref{e:marginalmodel}), versus the alternative
$H_1$ that entries in cells with higher row total tend to be found in cells with
higher column total. The graph theoretic interpretation of $H_1$ is that, for a typical
edge, left degree and right degree are positively associated.

Under $H_0$ the likelihood ratio statistic (\ref{e:llfgrouped}) is approximately normal$(0,1)$
by Proposition \ref{p:clt-bmm}, but under $H_1$ 
the quantity $\hat{N}_{w,d}$ tends to be positive when $w,d$ are large, which carries lower weight
$\mu_{w,d}$, but negative when $w,d$ are small, which carries higher weight.
In any case, dependency of left degree and right degree will produce bias.
 Hence a two-sided test of size $p$ 
rejects $H_0$ in favor of $H_1$ if $|\xi|$ lies outside the
range of the $p/2$ and $1 - p/2$ quantiles of the standard
normal distribution.

In summary the likelihood ratio statistic fails to detect block structure, but is capable
of testing association between right and left vertex degrees in a sparse bipartite graph.

\textbf{Example: } Generate an incidence matrix whose rows are indexed by
integers generated uniformly at random in
some large window $[1, e^t]$, and whose columns are indexed
by rational primes exceeding $e^{t/4}$. Incidence $Z_{i,j} = 1$ if the $j$-th prime
divides the $i$-th integer. Discard empty rows.
Only 1, 2 or 3 factors in the range $[e^{t/4}, e^t]$ are possible. 
If an integer has 3 factors, the largest is less than
$e^{t/2}$. Hence there is an association
 between left vertex degree (row) and right vertex degree (column).

\subsection{Estimating number of repeated edges} \label{s:duplicates}
In this section, we shall estimate the number of repeated edges for a sequence of
bipartite multigraphs, constructed according to (\ref{e:halfedgecon}).

In the notation of (\ref{e:degreesums}), let $W$ and $D$ denote 
the degrees of a left vertex and a right vertex, respectively, 
selected uniformly at random. Their first and second moments are:
\begin{equation} \label{e:degreemoments}
\E[W] =\frac{N}{n}; \quad
\E[W^2] =\sum_{i=1}^n \frac{w_i^2}{n}; \quad
\E[D] =\frac{N}{m}; \quad
\E[D^2] = \sum_{j=1}^m \frac{d_j^2}{m}.
\end{equation}
Denote by
\begin{equation} \label{e:incidencerate}
\gamma_{i,j}:=\frac{w_i d_j}{N}
\end{equation}
the expected number of instances of the edge $(u_i, v_j)$ in the model (\ref{e:halfedgecon}).
We may associate with any pair of count vectors $\mathbf{w, d}$ as in
(\ref{e:degreesums}) the left (resp. right) degree 
\textbf{coefficients of variation}:
\[
C(\mathbf{w}):=\frac{\E[W^2]}{\E[W]}; \quad 
C(\mathbf{d}):=\frac{\E[D^2]}{\E[D]},
\]
and the \textbf{maximum incidence rate} 
\[
\gamma_*(\mathbf{w, d}):=\max_{i \leq n, j \leq m}{\gamma_{i,j}}.
\]
\begin{proposition}\label{p:duplicates}
Construct, according to (\ref{e:halfedgecon}), a sequence $G_k:=(L_k \cup R_k, E_k)$ of random bipartite (multi)graphs, where $|L_k| \to \infty$, $|R_k| \to \infty$, 
$|E_k| \to \infty$, based upon degree sequences (\ref{e:degreesums})
whose left and right degree coefficients of variation converge
to $C_L \in [0, \infty)$ and $C_R \in [0, \infty)$, respectively, 
and whose maximum incidence rates converge to zero.
Then the expected number of duplicate edges converges to
\begin{equation} \label{e:duplicates}
\frac{C_L C_R} {2},
\end{equation}
while the expected number of edges with three or more instances
converges to zero.
\end{proposition}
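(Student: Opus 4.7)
The plan is to identify the multiplicity of each candidate edge $(u_i, v_j)$ as a hypergeometric random variable, then invoke a Poisson approximation justified by $\gamma_*\to 0$ and sum over vertex pairs. For each pair, let $X_{i,j}$ be the number of times the edge $(u_i, v_j)$ appears in the multigraph $B_\pi$ of Definition~\ref{d:perm2graph}. Because $\pi$ is uniform on $\S_N$, the images $\{\pi(p) : h_p^L = u_i\}$ form a uniformly chosen $w_i$-subset of $\{1, \ldots, N\}$, of which exactly $d_j$ positions carry the label $v_j$. Consequently $X_{i,j}$ is hypergeometric with parameters $(N, d_j, w_i)$, with
\[
\E X_{i,j} = \gamma_{i,j}, \qquad \E\binom{X_{i,j}}{k} = \frac{\binom{w_i}{k}\binom{d_j}{k}}{\binom{N}{k}}.
\]

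Take the number of duplicate edges to be $D_N := \sum_{i,j} 1\{X_{i,j} \geq 2\}$. Since $\gamma_{i,j} \leq \gamma_* \to 0$, the hypergeometric distribution of $X_{i,j}$ is uniformly close to $\mathrm{Poisson}(\gamma_{i,j})$, yielding $\P(X_{i,j} \geq 2) = \gamma_{i,j}^2/2 + O(\gamma_{i,j}^3)$. Summing and using $\sum_i w_i^2 = N \cdot C(\mathbf{w})$ and the analogous right-vertex identity,
\[
\E D_N = \frac{1}{2}\sum_{i,j} \gamma_{i,j}^2 + O\!\Big(\sum_{i,j} \gamma_{i,j}^3\Big) = \frac{C(\mathbf{w}) C(\mathbf{d})}{2} + O\big(\gamma_* C(\mathbf{w})C(\mathbf{d})\big) \longrightarrow \frac{C_L C_R}{2}.
\]
The remainder vanishes because $\sum_{i,j} \gamma_{i,j}^3 \leq \gamma_* \sum_{i,j} \gamma_{i,j}^2 \to 0$.

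For the second assertion, the Markov-type bound $\P(X_{i,j} \geq 3) \leq \E\binom{X_{i,j}}{3} \leq \gamma_{i,j}^3/6$ gives $\sum_{i,j} \P(X_{i,j} \geq 3) = O(\gamma_* C_L C_R) \to 0$, so the expected number of triply-or-more-repeated edges is $o(1)$.

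The main obstacle is making the Poisson comparison both quantitative and uniform in $(i,j)$: although each $\gamma_{i,j}$ is small, the sum $\sum_{i,j}\gamma_{i,j}$ equals $N$, so careless errors could accumulate. The saving observation is that $w_i/N \leq \gamma_{i,j}$ and $d_j/N \leq \gamma_{i,j}$ (since all degrees are at least one), which bounds the second-order Taylor correction to $\log\P(X_{i,j}=0) = -\gamma_{i,j} + O(\gamma_{i,j}^2)$ uniformly in $(i,j)$; summed, the total correction is $O(\gamma_* C(\mathbf{w}) C(\mathbf{d})) = o(1)$, as required.
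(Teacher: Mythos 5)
Your setup is correct and in fact sharper than anything in the paper's own proof: identifying the cell multiplicity $X_{i,j}$ as hypergeometric with parameters $(N,d_j,w_i)$, with $\E\binom{X_{i,j}}{k}=\binom{w_i}{k}\binom{d_j}{k}/\binom{N}{k}$, is exactly the right description of the half-edge model, and your handling of triples and of the error term $\sum_{i,j}\gamma_{i,j}^3\le\gamma_*\sum_{i,j}\gamma_{i,j}^2$ mirrors the paper. The gap is the central claim that $\gamma_*\to 0$ alone makes each $X_{i,j}$ uniformly close to Poisson$(\gamma_{i,j})$, so that $\P(X_{i,j}\ge 2)=\gamma_{i,j}^2/2+O(\gamma_{i,j}^3)$. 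This fails for cells with small degrees: if $w_i=1$ then $\P(X_{i,j}\ge 2)=0$ exactly, while $\gamma_{i,j}^2/2=d_j^2/(2N^2)$ is not $O(\gamma_{i,j}^3)$ unless $d_j$ is of order $N$. The correct per-cell size is given by your own factorial moments, via $\E\binom{X_{i,j}}{2}-2\,\E\binom{X_{i,j}}{3}\le\P(X_{i,j}\ge 2)\le\E\binom{X_{i,j}}{2}$, and summing the exact second factorial moments gives $\sum_{i,j}\binom{w_i}{2}\binom{d_j}{2}/\binom{N}{2}=\frac{N}{2(N-1)}\bigl(C(\mathbf{w})-1\bigr)\bigl(C(\mathbf{d})-1\bigr)\to (C_L-1)(C_R-1)/2$, not $C_LC_R/2$. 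The aggregate discrepancy $\frac12\sum_{i,j}\gamma_{i,j}^2-\sum_{i,j}\E\binom{X_{i,j}}{2}\to (C_L+C_R-1)/2\ge 1/2$ is therefore not $o(1)$, so your claimed error bound $O(\gamma_*\,C(\mathbf{w})C(\mathbf{d}))$ cannot be right; note that finiteness of $C_L=\lim\E[W^2]/\E[W]\ge\lim\E[W]$ forces bounded mean degrees, so replacing $w_i(w_i-1)$ by $w_i^2$ is never asymptotically harmless under the stated hypotheses. Your closing remark about expanding $\log\P(X_{i,j}=0)$ does not repair this, because the problem sits in the second factorial moment, not in the void probability.

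For context, the paper's proof makes the same leap (it asserts the Poisson formulas ``also hold'' for the actual model once the maximum incidence rate tends to zero), so you have faithfully reproduced its argument; but the step is unjustified there as well, and your hypergeometric formula is precisely the tool that exposes it. The clean way to finish from your setup is to drop the Poisson comparison entirely: use the Bonferroni bounds above, observe that $\sum_{i,j}\E\binom{X_{i,j}}{3}\le c\,\gamma_*\sum_{i,j}\E\binom{X_{i,j}}{2}\to 0$ (which also yields the assertion about edges of multiplicity three or more), and conclude that the expected number of repeated edges converges to $(C_L-1)(C_R-1)/2$ --- that is, the stated constant is obtained only if the coefficients of variation are read as the factorial-moment ratios $\E[W(W-1)]/\E[W]$ and $\E[D(D-1)]/\E[D]$ rather than $\E[W^2]/\E[W]$ and $\E[D^2]/\E[D]$.
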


\textbf{Remark: } The Poisson approximation technique used in the
proof could no doubt be extended to show that the variance
of the number of duplicate edges also converges to (\ref{e:duplicates}).

\begin{proof} 
If the number $Z_{i,j}$ of edges $(u_i, v_j)$ were Poisson$(\gamma_{i,j})$,
then
\[
\P[Z_{i,j} \geq 2] =
1 - e^{-\gamma_{i,j}} (1 + \gamma_{i,j}) 
= \frac{\gamma_{i,j}^2}{2} + O(\gamma_{i,j}^3)
\]
and $\P[Z_{i,j} = 2]$ admits the same approximation.
These approximations also hold for the multinomial, as we have here,
when the maximum incidence rate converges to zero.
Let 
\[
\xi:=\sum_{i,j} 1_{ \{ Z_{i,j} \geq 2 \} }
\] denote the number of edge positions which are occupied twice or more.
The first moment $\E[\xi]$ is approximated to second order in $(\gamma_{i,j})$ by
\[
\frac{1}{2} \sum_{i,j} \gamma_{i,j}^2
= \frac{1}{2 N^2} \sum_{i=1}^n w_i^2 \sum_{j=1}^m d_j^2
= \frac{n m}{2 N^2} \E[W^2] \E[D^2]
= \frac{1}{2} \frac{\E[W^2] \E[D^2]}{\E[W] \E[D]}.
\]
in the notation of (\ref{e:degreemoments}),
which converges to (\ref{e:duplicates}).
 As for the error term,
\[
 \sum_{i,j} \gamma_{i,j}^3 \leq
\gamma_* \sum_{i,j} \gamma_{i,j}^2
\]
which converges to zero. 
This shows that the number of edge positions which are occupied twice or more
converges to (\ref{e:duplicates}), while
 the number of edges appearing
three or more times converges to zero.
\end{proof}


\textbf{Acknowledgments: } The authors thank Michael Capalbo, John Conroy, Joseph McCloskey, Richard Lehouq, and Karl Rohe
for helpful insights into the literature, and insightful comments.



\end{document}